\documentclass[12pt,reqno]{amsart}

\usepackage{geometry}
\usepackage{hhline}

\usepackage{mathtools}

\usepackage{mathdots}

\usepackage{color}

\usepackage{pdfsync}

\usepackage{enumitem}

\usepackage{wasysym}

\usepackage{amssymb}

\usepackage{mathrsfs}

\usepackage{bbm}

\DeclareMathAlphabet{\mathpzc}{OT1}{pzc}{m}{it}

\usepackage[all]{xy}

\usepackage{tikz}
\usetikzlibrary{arrows,matrix,decorations.pathmorphing,decorations.pathreplacing,positioning,shapes.geometric,shapes.misc,decorations.markings,decorations.fractals,calc,patterns}

\usepackage{tikz-cd}

\usepackage{graphicx}
\usepackage{float}
\usepackage[bottom]{footmisc}
\usepackage{moreenum}
\usepackage{makecell}
\entrymodifiers={+!!<0pt,\fontdimen22\textfont2>}

\usepackage{scalerel,stackengine}
\stackMath
\newcommand\newcheck[1]{%
\savestack{\tmpbox}{\stretchto{%
  \scaleto{%
    \scalerel*[\widthof{\ensuremath{#1}}]{\kern-.6pt\bigwedge\kern-.6pt}%
    {\rule[-\textheight/2]{1ex}{\textheight}}%WIDTH-LIMITED BIG WEDGE
  }{\textheight}% 
}{0.5ex}}%
\stackon[1pt]{#1}{\scalebox{-1}{\tmpbox}}%
}
\stackMath
\newcommand\newhat[1]{%
\savestack{\tmpbox}{\stretchto{%
  \scaleto{%
    \scalerel*[\widthof{\ensuremath{#1}}]{\kern-.6pt\bigwedge\kern-.6pt}%
    {\rule[-\textheight/2]{1ex}{\textheight}}%WIDTH-LIMITED BIG WEDGE
  }{\textheight}% 
}{0.5ex}}%
\stackon[1pt]{#1}{\scalebox{1}{\tmpbox}}%
}

\def\cB{\mathscr{B}}

\def\cM{\mathscr{M}}

\def\cU{\mathscr{U}}
\def\cV{\mathscr{V}}
\def\cW{\mathscr{W}}
\def\cX{\mathscr{X}}
\def\cY{\mathscr{Y}}

\def\BZ{\mathbb{Z}}

\mathchardef\mhyphen="2D

\def\add{\operatorname{add}}

\def\adots{\mathinner{\mkern1mu\raise1.0pt\vbox{\kern7.0pt\hbox{.}}\mkern2mu\raise5.0pt\hbox{.}\mkern2mu\raise9.0pt\hbox{.}\mkern1mu}}

\def\dddots{\mathinner{\mkern1mu\raise10.0pt\vbox{\kern7.0pt\hbox{.}}\mkern2mu\raise5.3pt\hbox{.}\mkern2mu\raise1.0pt\hbox{.}\mkern1mu}}
\def\dddotssmall{\mathinner{\mkern1mu\raise7.0pt\vbox{\kern7.0pt\hbox{.}}\mkern-1mu\raise4pt\hbox{.}\mkern-1mu\raise1.0pt\hbox{.}\mkern1mu}}

\def\dtors{d\mbox{-tors}}
\def\dual{\operatorname{D}}

\def\H{\operatorname{H}}

\def\Hom{\operatorname{Hom}}

\def\K{\operatorname{K}}

\def\mod{\operatorname{mod}}

\def\PSL2{\operatorname{PSL}_2}

\def\SL2{\operatorname{SL}_2}

\numberwithin{equation}{section}

\newtheorem{Lemma}{Lemma}[section]
\newtheorem{Theorem}[Lemma]{Theorem}

\theoremstyle{definition}

\newtheorem{Setup}[Lemma]{Setup}

\newtheorem{Remark}[Lemma]{Remark}

\newtheorem{Example}[Lemma]{Example}

\theoremstyle{theorem}
\newtheorem{ThmIntro}{Theorem}

\theoremstyle{definition}

\newtheorem*{bfhpg*}{}

\usepackage{amsthm}
\usepackage{thmtools}
\declaretheoremstyle[
notefont=\bfseries, notebraces={}{},
bodyfont=\normalfont,
headformat=\NUMBER~\NOTE,
headpunct={}
]{foobar}

  {\begin{list}{}{%
    \settowidth{\labelwidth}{\textbf{#1:}}%
    \setlength{\leftmargin}{\labelwidth}\addtolength{\leftmargin}{\labelsep}}}%
  {\end{list}}

\makeatletter
\@namedef{subjclassname@2020}{%
  \textup{2020} Mathematics Subject Classification}
\makeatother

\begin{document}

\setlength{\parindent}{0pt}
\setlength{\parskip}{7pt}

\title[Splitting higher torsion classes]{Splitting torsion classes in higher homological algebra}

\author{Erlend D.\ B\o rve}

\address{Department of Mathematics, Aarhus University, Ny Munkegade 118, 8000 Aarhus C, Denmark}
\email{erlend.d.borve@math.au.dk}

%\urladdr{https://bervinator.github.io}

\author{Peter J\o rgensen}

\address{Department of Mathematics, Aarhus University, Ny Munkegade 118, 8000 Aarhus C, Denmark}
\email{peter.jorgensen@math.au.dk}

\urladdr{https://sites.google.com/view/peterjorgensen}

\author{Mads Hustad Sand\o y}

\address{Department of Mathematics, Aarhus University, Ny Munkegade 118, 8000 Aarhus C, Denmark}
\email{mads.sandoy@ntnu.no}

\urladdr{https://sites.google.com/view/mads-h-sandoy-math}

%\thanks{Date: \today. A thank you would go here}

\keywords{$d$-abelian category, $d$-Auslander--Reiten theory, $d$-torsion class, higher Auslander algebra of Dynkin type $A$, lattice, spine}

\subjclass[2020]{05E10, 06A07, 16S90, 18E99}

\begin{abstract} 

Let $d \geqslant 1$ be an integer, $\cM$ a suitable $d$-abelian category.  There is a notion of higher torsion classes in $\cM\!$, also known as $d$-torsion classes.  

\medskip
\noindent
We generalise a classic theorem of Hoshino by providing criteria for a $d$-torsion class $\cU$ to be splitting,  notably that $\cU$ is stable under $\tau_d^-$, the inverse $d$-Auslander--Reiten translation.

\medskip
\noindent
We apply the criteria to show that under additional assumptions, which are satisfied in Dynkin type $A$, the elements of the spine of the lattice of $d$-torsion classes are precisely the splitting $d$-torsion classes.  The spine consists of the elements which belong to a chain of maximal length. 

\end{abstract}

\maketitle

\setcounter{section}{-1}
\section{Introduction}
\label{sec:introduction}

Higher homological algebra was founded by Iyama, see \cite{Iyama_Auslander-correspondence}, \cite{Iyama_Cluster-tilting-for-higher-Auslander-algebras}, \cite{Iyama_Higher-dimensional-AR-theory}.  A subsequent key step was taken by Jasso who introduced $d$-abelian categories for integers $d \geqslant 1$, see  \cite[def.\ 3.1]{Jasso_n-abelian}.  A $1$-abelian category is just an abelian category in the classic sense, while a $d$-abelian category has complexes of length $d$ instead of kernels and cokernels.

Higher torsion classes in $d$-abelian categories generalise torsion classes in classic abelian categories.  They are also known as $d$-torsion classes and were introduced in \cite[def.\ 1.1]{Jorgensen_Torsion-classes-in-higher-homological-algebra}, where splitting $d$-torsion classes were defined too.  See Remark \ref{rmk:d-torsion_classes}.  

Our first main result is the following generalisation of the classic characterisation of splitting torsion classes due to Hoshino; see \cite[prop.\ 1]{Hoshino_On-splitting-torsion-theories-induced-by-tilting-modules} and \cite[prop.\ VI.1.7]{Assem-Simson-Skowronski_Elements-I}.  Let $\cM$ be a $d$-abelian category arising as a $d$-cluster tilting subcategory of $\mod\,A$ for a finite dimensional algebra $A$ over an algebraically closed field $k$; see \cite[def.\ 2.2]{Iyama_Higher-dimensional-AR-theory} and \cite[def.\ 3.14]{Jasso_n-abelian}.

\begin{ThmIntro}
[= Theorem \ref{thm:generalisation_of_ASS_VI1.7}]
\label{thm:A}
Let $\cU \subseteq \cM$ be a $d$-torsion class.  The following two conditions are equivalent:
\begin{enumerate}
\setlength\itemsep{4pt}

  \item  $\cU$ is splitting.
  
  \item  $\tau_d^-\cU \subseteq \cU\!$, where $\tau_d^-$ is the inverse $d$-Auslander--Reiten translation. 

\end{enumerate}
Now assume $d \geqslant 2$.  Then conditions (i) and (ii) are also equivalent to the following:
\begin{enumerate}
\setcounter{enumi}{2}
\setlength\itemsep{4pt}

  \item  If $\delta = 0 \xrightarrow{} U \xrightarrow{} M \xrightarrow{} V^1 \xrightarrow{} \cdots \xrightarrow{} V^d \xrightarrow{} 0$ is a canonical $d$-exact sequence with respect to $\cU\!$, then
\[  
  \overline{ \Hom }_A( \widetilde{ U },\tau_dV^{ d-1 } ) \xrightarrow{} \overline{ \Hom }_A( \widetilde{ U },\tau_dV^{ d } )
\]  
is surjective for each $\widetilde{ U } \in \cU\!$, where $\tau_d$ is the $d$-Auslander--Reiten translation and $\overline{ \Hom }_A$ is injectively stabilised $\Hom_A$.  

\end{enumerate}
\end{ThmIntro}

Note that the classic characterisation of splitting torsion classes also provides a condition on the corresponding torsion free classes.  This is not possible for $d$-torsion classes, which do not in general have corresponding $d$-torsion free classes.  However, a substitute is provided by condition (iii) in Theorem \ref{thm:A}.

Our second main result concerns the partially ordered set of $d$-torsion classes.  It was proved by August, Haugland, Jacobsen, Kvamme, Palu, and Treffinger that it is a lattice; see \cite[thm.\ 4.3]{August-et-al_A-characterisation-of-higher-torsion-classes}.  The spine of a lattice was originally defined for trim lattices by Thomas in \cite[p.\ 257]{Thomas_An-analogue-of-distributivity-for-ungraded-lattices}; it consists of the elements which belong to a chain of maximal length.  The spine of the lattice of classic torsion classes has recently been investigated by Asai, Iyama, Mousavand, and Paquette; see \cite[thm.\ 1.7]{Asai-Iyama-Mousavand-Paquette_Brick-splitting-torsion-pairs-and-trim-lattices}.  We show the following result in the higher case.

\begin{ThmIntro}
[= Theorem \ref{thm:spine}]
\label{thm:B}
Assume that $\cM$ has no cycles in the sense of Remark \ref{rmk:cycles} and that it has only finitely many isomorphism classes of indecomposable modules.  

Then the elements of the spine of the lattice of $d$-torsion classes of $\cM$ are precisely the splitting $d$-torsion classes.
\end{ThmIntro}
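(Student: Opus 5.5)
We plan to reduce Theorem \ref{thm:B} to Theorem \ref{thm:A}(ii), that is, to stability of $\cU$ under $\tau_d^-$. We then compare maximal-length chains in the lattice of $d$-torsion classes with \emph{admissible} orderings of the indecomposables of $\cM$. The finiteness assumption means that $\cM$ has finitely many indecomposables, say $n$ of them. The no-cycles assumption gives a partial order $\preceq$ on the indecomposables, generated by nonzero non-isomorphisms $X \to Y$. We will use the description of $d$-torsion classes from \cite{August-et-al_A-characterisation-of-higher-torsion-classes}: a $d$-torsion class is determined by the set of indecomposables it contains, and it is closed under $d$-quotients and $d$-extensions.

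\textbf{Step 1: length of chains.} Every strict inclusion of $d$-torsion classes adds at least one indecomposable. Hence every chain from $0$ to $\cM$ has length at most $n$, and a chain has length $n$ exactly when each cover adds exactly one indecomposable. We will exhibit a chain of length $n$. Choose a linear extension of the order in which sources come last. Concretely, since $\tau_d^-$ points "forward" in an acyclic category, list the indecomposables $X_n,\dots,X_1$ so that $\cU_i=\add\{X_1,\dots,X_i\}$ is closed under successors. This requires $X_1$ to be a sink, etc. Such an upward-closed set $\cU_i$ is closed under $d$-quotients. It is also closed under $d$-extensions, because the middle terms of a $d$-exact sequence lie after its first term. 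So $\cU_i$ is a $d$-torsion class. Being closed under successors, $\cU_i$ is closed under $\tau_d^-$, since $\Hom(X,\tau_d^-X)$ is nonzero through the AR sequence, or we argue via the $d$-almost split sequence. Hence maximal length equals $n$.

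\textbf{Step 2: spine elements are splitting.} Let $\cU$ lie on a chain of length $n$, and suppose $\tau_d^-\cU \not\subseteq \cU$. Pick an indecomposable $X\in\cU$ with $\tau_d^- X=Y\notin\cU$. Consider the $d$-almost split sequence $0\to X\to E^1\to\cdots\to E^d\to Y\to 0$. Along the chain, $Y$ is added strictly later than $X$, at a single-indecomposable cover $\cV\subset\cV\cup\{Y\}$ with $X\in\cV$. We aim to show that this cover forces at least two indecomposables to be added, contradicting Step 1. The key claim is that the smallest $d$-torsion class containing $\cV$ and $Y$ also contains a $d$-quotient of $Y$ lying outside $\cV$, or else that $Y\notin\cV$ together with $X\in\cV$ contradicts closure under $d$-extensions. \textbf{This step is the main obstacle.} We would first try the canonical $d$-exact sequence of $Y$ with respect to $\cV$, and use Theorem \ref{thm:A}(iii) together with the Auslander--Reiten formula $\overline{\Hom}(\widetilde U,\tau_d V)\cong \dual\operatorname{Ext}^d(V,\widetilde U)$ to produce a nonsplit $d$-extension.

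\textbf{Step 3: splitting classes lie on the spine.} Let $\cU$ satisfy $\tau_d^-\cU\subseteq\cU$. Then $\cU$ is closed under successors, because in an acyclic representation-finite setting every irreducible map path is generated via $d$-almost split sequences and $\tau_d^-$. So $\cU$ is an upward-closed set. We then choose the linear extension in Step 1 so that $\cU$ appears as some $\cU_i$: first list the elements of $\cU$ with sinks first, then the rest. This places $\cU$ on a chain of length $n$.
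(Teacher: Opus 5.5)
You set up the right framework. Maximal chains in $\dtors\,\cM$ have length $n$, each cover adds one indecomposable, splitting classes lie on such chains, and the converse reduces via Theorem \ref{thm:A} to showing $\tau_d^-\cU\subseteq\cU$.

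\textbf{The gap is Step 2, which is the substantive direction.}
\begin{itemize}
\item You flag it as the obstacle and list tools that do not yield a contradiction: the canonical sequence of $Y$ with respect to $\cV$, Theorem \ref{thm:A}(iii), and the Auslander--Reiten formula.
\item Your stated target, that the cover must add two indecomposables, is not what actually goes wrong.
\item The no-cycles hypothesis never enters your sketch, yet this is exactly where it is needed.
\end{itemize}

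Your setup does suffice once the right argument is supplied. Let $\cV'=\add(Y,\cV)$ be the class directly above $\cV$, so that $X\in\cV$, $Y=\tau_d^-X\notin\cV$, and $X,Y\in\cV'$.
\begin{itemize}
\item The $d$-Auslander--Reiten sequence $0\to X\to E^1\to\cdots\to E^d\to Y\to 0$ is minimal. Since $\cV'$ is closed under $d$-extensions (August et al., lem.\ 3.8(1) and prop.\ 3.11), you get $E^1\in\cV'$.
\item The differentials are radical and non-vanishing on every indecomposable summand (Pasquali, lem.\ 3.8). So if $Y$ were a summand of $E^1$, you could read off a cycle $Y\to\cdots\to Y$, which is excluded. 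Hence $E^1\in\cV$.
\item Since $\cV$ is closed under $d$-quotients, this forces $E^2,\dots,E^d,Y\in\cV$, contradicting $Y\notin\cV$.
\end{itemize}
The paper runs the same argument at the first non-splitting class in the chain, using that the class above it is splitting. Your choice of the step where $Y$ enters avoids that induction.

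\textbf{Steps 1 and 3 reach true conclusions with faulty justifications.}
\begin{itemize}
\item $\Hom_A(X,\tau_d^-X)$ need not be nonzero. For $d=1$ and $A=kA_2$, the simple projective $S$ has $\tau_1^-S$ equal to the simple injective, and $\Hom_A(S,\tau_1^-S)=0$.
\item The claim that ``middle terms lie after the first term'' fails for non-minimal $d$-exact sequences.
\item The claim in Step 3 that irreducible maps are generated by $d$-almost split sequences and $\tau_d^-$ is unsupported.
\end{itemize}
None of this is needed. A class is closed under successors precisely when every indecomposable lies in $\cU$ or in $\cU^{\perp}$. By Lemma \ref{lem:criterion_for_splitting}, the up-sets are therefore exactly the splitting $d$-torsion classes, with canonical sequences $0\to U\to U\oplus V\to V\to 0\to\cdots\to 0$. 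This settles Step 1 directly, and Step 3 follows from the splitting property without passing through $\tau_d^-$.
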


Theorem \ref{thm:B} applies in particular to Dynkin type $A$; see Example \ref{exa:Type_A}.

We refer to \cite{Asadollahi-Jorgensen-Schroll-Treffinger_On-higher-torsion-classes}, \cite{August-et-al_A-characterisation-of-higher-torsion-classes}, \cite{August-et-al_Higher-torsion-classes-tau-d-tilting-theory-and-silting-complexes}, \cite{Fedele-Jorgensen-Shah_The-index-in-d-exact-categories}, \cite{Kvamme_Higher-extension-closure-and-d-exact-categories}, \cite{Segovia_P-comma-phi-Tamari-and-higher-torsion-lattices-of-type-A}, \cite{Segovia_P-comma-phi-Tamari-lattices} for other recent contributions to the theory of $d$-torsion classes.

We end the introduction with a blanket setup and a few background items.

\begin{Setup}
The following will be fixed throughout.
\begin{enumerate}
\setlength\itemsep{4pt}

  \item  $k$ is an algebraically closed field and $d \geqslant 1$ is an integer.
  
  \item  $A$ is a finite dimensional $k$-algebra and $\mod A$ is the abelian category of finite dimensional right $A$-modules.
  
  \item  $\dual( - ) = \Hom_k( -,k )$ is the $k$-linear duality functor.

  \item  $\cM \subseteq \mod\,A$ is a $d$-cluster tilting subcategory; see \cite[def.\ 2.2]{Iyama_Higher-dimensional-AR-theory} and \cite[def.\ 3.14]{Jasso_n-abelian}.   
  
  \item  If $\cX \subseteq \cM$ is an additive subcategory, then we write
\[
  \cX^{ \perp }
  = \{\, Y \in \cM \,|\, \Hom_A( \cX,Y ) = 0 \,\}.
\]

\end{enumerate}
\end{Setup}

\begin{Remark}
[$d$-abelian categories]
Under the setup, $\cM$ is a $d$-abelian category by \cite[thm.\ 3.16]{Jasso_n-abelian}.  The notion of $d$-exact sequences in $\cM$ will be used repeatedly; see \cite[def.\ 2.4]{Jasso_n-abelian}.  It follows from \cite[props.\ 2.2 and 2.6]{Jasso-Kvamme_Introduction-to_higher_AR-theory} that a $d$-exact sequence in $\cM$ is simply an exact sequence in $\mod\,A$,
\[
  0
  \xrightarrow{} M^0
  \xrightarrow{} M^1
  \xrightarrow{} \cdots
  \xrightarrow{} M^d
  \xrightarrow{} M^{ d+1 }
  \xrightarrow{} 0,
\]
with $M^i \in \cM$ for each $i$.
\end{Remark}

\begin{Remark}
[cycles]
\label{rmk:cycles}
Recall from \cite[p.\ 313]{Auslander-Reiten-Smalo_Representation-theory-of-Artin-algebras} that a cycle from $M$ to $M$ in $\cM$ is a sequence $M \xrightarrow{ \mu_0 } M_1 \xrightarrow{ \mu_1 } \cdots \xrightarrow{ \mu_{ t-2 } } M_{ t-1 } \xrightarrow{ \mu_{ t-1 } } M$ with $t \geqslant 1$, where $M$ and each $M_i$ is indecomposable in $\cM\!$, and each $\mu_i$ is not zero and not an isomorphism.  Note that if $t = 1$, then the cycle has the form $M \xrightarrow{ \mu_0 } M$.
\end{Remark}

\begin{Remark}
[$d$-torsion classes]
\label{rmk:d-torsion_classes}
${\;}$
\begin{enumerate}
\setlength\itemsep{4pt}

  \item  Let $\cU \subseteq \cM$ be an additive subcategory.  Let $M \in \cM$ be given.  A canonical $d$-exact sequence of $M$ with respect to $\cU$ is a $d$-exact sequence 
\begin{equation}
\label{equ:canonical_sequence}
  0
  \xrightarrow{} U
  \xrightarrow{ \upsilon } M
  \xrightarrow{} V^1
  \xrightarrow{} \cdots
  \xrightarrow{} V^d
  \xrightarrow{} 0
\end{equation}
in $\cM$ with $U \in \cU$ such that
\[
  0
  \xrightarrow{} \Hom_A( \widetilde{ U },V^1 )
  \xrightarrow{} \cdots 
  \xrightarrow{} \Hom_A( \widetilde{ U },V^d )
  \xrightarrow{} 0
\]
is exact for each $\widetilde{ U } \in \cU\!$.  In such a sequence, $M$ determines $\upsilon$ up to isomorphism since $\upsilon$ is a $\cU$-cover of $M$; see \cite[lem.\ 2.7(i)]{Jorgensen_Torsion-classes-in-higher-homological-algebra}.

  \item  A $d$-torsion class $\cU \subseteq \cM$ is an additive subcategory such that each $M \in \cM$ has a canonical $d$-exact sequence with respect to $\cU\!$.
  
  \item  A $d$-torsion class $\cU \subseteq \cM$ is called splitting if each $M \in \cM$ has a canonical $d$-exact sequence \eqref{equ:canonical_sequence} with respect to $\cU$ where $\upsilon$ is a split monomorphism.  In this case, every canonical $d$-exact sequence with respect to $\cU$ has $\upsilon$ a split monomorphism because $M$ determines $\upsilon$ up to isomorphism.

  \item  The set of $d$-torsion classes in $\cM$ is denoted $\dtors\,\cM$.  It is clearly a partially ordered set under inclusion.  This partially ordered set is, in fact, a lattice; that is, it has meets and joins.  This holds by \cite[thm.\ 4.3]{August-et-al_A-characterisation-of-higher-torsion-classes}, which also states that meets are given by intersection.

\end{enumerate}
\end{Remark}

\begin{Remark}
[$d$-Auslander--Reiten theory]
\label{rmk:tau_d}
${\;}$
\begin{enumerate}
\setlength\itemsep{4pt}

  \item  The projective stable module category $\underline{ \mod }\,A$ is the na\"{i}ve quotient of $\mod A$ by the ideal of homomorphisms factoring through a projective module, and its $\Hom$ functor is $\underline{ \Hom }_A$.  There is a full subcategory $\underline{ \cM } \subseteq \underline{ \mod }\,A$ induced by $\cM \subseteq \mod\,A$.
  
  \item  The injective stable module category $\overline{ \mod }\,A$ is the na\"{i}ve quotient of $\mod A$ by the ideal of homomorphisms factoring through an injective module, and its $\Hom$ functor is $\overline{ \Hom }_A$.  There is a full subcategory $\overline{ \cM } \subseteq \overline{ \mod }\,A$ induced by $\cM \subseteq \mod\,A$.

  \item  The $d$-Auslander--Reiten translation $\tau_d$ and its left adjoint $\tau_d^-$ are functors
\[
  \begin{tikzcd}
	{\underline{ \mod }\,A} & {\overline{ \mod }\,A}\lefteqn{,}
	\arrow["{\tau_d}"', shift right=1, from=1-1, to=1-2]
	\arrow["{\tau_d^-}"', shift right=1, from=1-2, to=1-1]
  \end{tikzcd}
\]  
see \cite[thm.\ 2.3.1(1)]{Iyama_Auslander-correspondence} and \cite[thm.\ 1.4.1]{Iyama_Higher-dimensional-AR-theory}.  They restrict to pseudo-inverse functors
\[
  \begin{tikzcd}
	{\underline{ \cM }} & {\overline{ \cM }\!.}
	\arrow["{\tau_d}"', shift right=1, from=1-1, to=1-2]
	\arrow["{\tau_d^-}"', shift right=1, from=1-2, to=1-1]
  \end{tikzcd}
\]  

  \item  A $d$-exact sequence $\delta$ in $\cM$ gives rise to co- and contravariant defect functors $\delta_*$ and $\delta^*$ defined in \cite[def.\ 3.1]{Jasso-Kvamme_Introduction-to_higher_AR-theory}.  By \cite[thm.\ 3.8]{Jasso-Kvamme_Introduction-to_higher_AR-theory} they satisfy the higher defect formula
\begin{equation}
\label{equ:defect_formula}
  \dual\!\delta^*( - ) \cong \delta_*\big( \tau_d( - ) \big),
\end{equation}
where both sides are functors defined on $\underline{ \cM }$, and its dual
\begin{equation}
\label{equ:dual_defect_formula}
  \dual\!\delta_*( - ) \cong \delta^*\big( \tau^-_d( - ) \big),
\end{equation}
where both sides are functors defined on $\overline{ \cM }\!$.

  \item  If $M \in \cM$ is given, then $\tau_d M$ is only defined up to isomorphism in $\overline{ \cM }$, but we make it defined up to isomorphism in $\cM$ by picking it not to have any non-zero injective summands.  Similarly, $\tau_d^- M$ is only defined up to isomorphism in $\underline{ \cM }$, but we make it defined up to isomorphism in $\cM$ by picking it not to have any non-zero projective summands.  

\medskip
\noindent
With this stipulation, $\tau_d$ and $\tau_d^-$ induce bijections between the isomorphism classes of non-projective indecomposable modules in $\cM$ and the isomorphism classes of non-injective indecomposable modules in $\cM\!$, see \cite[thm.\ 2.3]{Iyama_Higher-dimensional-AR-theory}.

\end{enumerate}
\end{Remark}

\section{Homological characterisation of splitting $d$-torsion classes}
\label{sec:splitting}

\begin{Lemma}
\label{lem:criterion_for_splitting}
Let $\cU \subseteq \cM$ be an additive subcategory.  The following conditions are equivalent:
\begin{enumerate}
\setlength\itemsep{4pt}

  \item  $\cU$ is a splitting $d$-torsion class.

  \item  Each $M \in \cM$ can be written $M = U \coprod V$ with $U \in \cU$ and $V \in \cU^{ \perp }$.

  \item  Each indecomposable $M \in \cM$ is either in $\cU$ or in $\cU^{ \perp }$.
  
\end{enumerate}
\end{Lemma}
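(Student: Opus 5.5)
I will prove the cycle (i)$\Rightarrow$(ii)$\Rightarrow$(iii)$\Rightarrow$(ii)$\Rightarrow$(i), using the definitions in Remark \ref{rmk:d-torsion_classes} directly. Throughout I use that $\cM$ is Krull--Schmidt, being a full additive subcategory of $\mod A$ closed under summands. I also use that $\cU$ and $\cU^{\perp}$ can be taken closed under summands; for $\cU$ this is implicit in speaking of indecomposables "in $\cU$", and for $\cU^\perp$ it is automatic.

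\emph{(i)$\Rightarrow$(ii).} Let $M\in\cM$ and take a canonical $d$-exact sequence \eqref{equ:canonical_sequence} in which $\upsilon$ is split mono, so $M\cong U\coprod C$.
\begin{itemize}
\item Since $\upsilon$ is split, exactness at $M$ and $V^1$ identifies $C$ with the image of $M\to V^1$. Hence $C\to V^1$ is injective, and the sequence breaks into the split sequence $0\to U\to M\to C\to 0$ together with an exact sequence $0\to C\to V^1\to\cdots\to V^d\to 0$.
\item For $\widetilde U\in\cU$, the sequence $0\to\Hom_A(\widetilde U,V^1)\to\Hom_A(\widetilde U,V^2)$ is exact by the canonical condition. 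Applying the left exact functor $\Hom_A(\widetilde U,-)$ to $0\to C\to V^1\to V^2$ gives $\Hom_A(\widetilde U,C)=\ker\big(\Hom_A(\widetilde U,V^1)\to\Hom_A(\widetilde U,V^2)\big)=0$.
\end{itemize}
So $C\in\cU^\perp$, which gives (ii).

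\emph{(ii)$\Rightarrow$(iii).} Let $M$ be indecomposable and write $M=U\coprod V$ as in (ii). Then one summand is zero, so $M\cong U\in\cU$ or $M\cong V\in\cU^\perp$.

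\emph{(iii)$\Rightarrow$(ii).} Decompose $M$ into indecomposables and group those lying in $\cU$ into $U$ and the remaining ones, which lie in $\cU^\perp$ by (iii), into $V$. Both $\cU$ and $\cU^\perp$ are additive, so $U\in\cU$ and $V\in\cU^\perp$.

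\emph{(ii)$\Rightarrow$(i).} This is the step needing a construction. Given $M=U\coprod V$, I need a canonical $d$-exact sequence with split $\upsilon$. The natural candidate is
\[
0\to U\to M\to V\xrightarrow{1} V\to 0\to\cdots\to 0,
\]
but it must have exactly $d+2$ terms with all terms in $\cM$. For $d\geqslant 2$, take $V^1=V^2=V$ with the identity map between them and $V^i=0$ for $i\geqslant 3$. This is exact, and it is $d$-exact by the remark on $d$-exact sequences since all terms lie in $\cM$. The Hom condition holds because $\Hom_A(\widetilde U,V)=0$ makes every term of the Hom complex vanish.

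For $d=1$ the sequence is $0\to U\to M\to V\to 0$, which is exact since it is split. The condition then only requires $\Hom_A(\widetilde U,V)=0$, which is true.

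So every $M$ has a canonical sequence with $\upsilon$ split mono, and $\cU$ is a splitting $d$-torsion class.

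The only delicate point is the bookkeeping of sequence length in this last step, together with checking in (i)$\Rightarrow$(ii) that the complement $C$ embeds in $V^1$. I expect neither to be a serious obstacle.
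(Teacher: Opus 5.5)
Your arguments for (i)$\Rightarrow$(ii), (ii)$\Rightarrow$(iii) and (iii)$\Rightarrow$(ii) are fine. In (i)$\Rightarrow$(ii) you get $\Hom_A(\widetilde U,C)=0$ directly from two facts: the injectivity of $\Hom_A(\widetilde U,V^1)\to\Hom_A(\widetilde U,V^2)$ (reading $V^2=0$ when $d=1$), and the embedding $C\hookrightarrow V^1$. The paper instead deduces it from the fact that $\upsilon$ is a $\cU$-cover. Both routes work, and yours is self-contained.

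The step that fails is (ii)$\Rightarrow$(i) for $d\geqslant 2$. Your sequence $0\to U\to M\to V\xrightarrow{1}V\to 0\to\cdots\to 0$ is not even a complex when $V\neq 0$. The composite $M\to V\xrightarrow{1}V$ is the nonzero projection $M\to V$. Equivalently, exactness at $V^1$ would require $\ker(1_V)=0$ to equal the image of $M\to V$, which is all of $V$. So your claim ``This is exact'' is false, and the sequence is not $d$-exact.

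The padding by the identity is unnecessary. Nothing in the definition forbids zero terms, and $0\in\cM$. Take $V^1=V$ and $V^2=\cdots=V^d=0$, that is, $0\to U\to M\to V\to 0\to\cdots\to 0$. This is exact in $\mod A$ because $0\to U\to M\to V\to 0$ is split exact. All its terms lie in $\cM$, so it is $d$-exact. The Hom condition reduces to $0\to\Hom_A(\widetilde U,V)\to 0\to\cdots\to 0$, which holds since $\Hom_A(\widetilde U,V)=0$.

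This is exactly the sequence the paper uses (citing Jasso--Kvamme, prop.\ 2.2). It also removes the need to treat $d=1$ separately.
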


\begin{proof}
(i)$\Rightarrow$(ii):  Assume that (i) is satisfied.  Let $M \in \cM$ be given.  Pick a canonical $d$-exact sequence \eqref{equ:canonical_sequence} of $M$ with respect to $\cU$ where $\upsilon$ is a split monomorphism.  Up to isomorphism, we can write $M = U \coprod V$ and view $\upsilon$ as the coproduct inclusion $U \xrightarrow{} M$.  Since $\upsilon$ is a $\cU$-cover by \cite[lem.\ 2.7(i)]{Jorgensen_Torsion-classes-in-higher-homological-algebra}, this implies $V \in \cU^{ \perp }$ which proves (ii).

(ii)$\Rightarrow$(i):  Assume that (ii) is satisfied.  Let $M \in \cM$ be given.  Write $M = U \coprod V$ with $U \in \cU$ and $V \in \cU^{ \perp }$.  Let $U \xrightarrow{ \iota } M$ be the coproduct inclusion.  There is a $d$-exact sequence 
\begin{equation}
\label{equ:lem:criterion_for_splitting:20}
  0
  \xrightarrow{} U
  \xrightarrow{ \iota } M
  \xrightarrow{} V
  \xrightarrow{} 0
  \xrightarrow{} \cdots
  \xrightarrow{} 0
\end{equation}
in $\cM$ by \cite[prop.\ 2.2]{Jasso-Kvamme_Introduction-to_higher_AR-theory} which can be used as the canonical $d$-exact sequence \eqref{equ:canonical_sequence}.  Since $\iota$ is a split monomorphism, this proves (i).

(ii)$\Leftrightarrow$(iii) is immediate because $\cM$ is a Krull--Schmidt category.
\end{proof}

\begin{Lemma}
\label{lem:criterion_for_contractible}
Let $\cB$ be an additive category and let $\K( \cB )$ be the homotopy category of chain complexes over $\cB\!$.  Let $B$ be a bounded chain complex over $\cB\!$.  The following conditions are equivalent:
\begin{enumerate}
\setlength\itemsep{4pt}

  \item  $B$ is isomorphic to zero in $\K( \cB )$.

  \item  For each $\widetilde{ B } \in \cB$ the complex $\Hom_{ \cB }( \widetilde{ B },B )$ is exact.
  
  \item  For each $\widetilde{ B } \in \cB$ the complex $\Hom_{ \cB }( B,\widetilde{ B } )$ is exact.

\end{enumerate}
\end{Lemma}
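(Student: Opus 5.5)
The plan is to prove (i)$\Rightarrow$(ii) and (i)$\Rightarrow$(iii) directly, and then prove (ii)$\Rightarrow$(i) by induction on the length of $B$; (iii)$\Rightarrow$(i) is dual. I would not try to rely on Krull--Schmidt or idempotent completeness of $\cB$, since the statement is for an arbitrary additive category.

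(i)$\Rightarrow$(ii),(iii): if $B \cong 0$ in $\K( \cB )$, then $\mathrm{id}_B$ is null-homotopic. Applying the additive functor $\Hom_{ \cB }( \widetilde{ B },- )$ or $\Hom_{ \cB }( -,\widetilde{ B } )$ preserves null-homotopies. Hence the resulting complexes of abelian groups are contractible, so in particular exact.

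(ii)$\Rightarrow$(i): write $B = ( \cdots \xrightarrow{} 0 \xrightarrow{} B^n \xrightarrow{ \partial^n } B^{ n+1 } \xrightarrow{} \cdots \xrightarrow{} B^m \xrightarrow{} 0 )$ and induct on $m-n$. The case $m = n$ is immediate: exactness of $\Hom( B^m,B )$ at degree $m$ says $\Hom( B^m,B^m ) = 0$, so $B^m = 0$. In general, apply exactness of $\Hom_{ \cB }( B^m,B )$ at degree $m$. Then $\mathrm{id}_{ B^m }$ lifts through $\partial^{ m-1 }$, giving $s \colon B^m \xrightarrow{} B^{ m-1 }$ with $\partial^{ m-1 } s = \mathrm{id}_{ B^m }$. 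So $\partial^{ m-1 }$ is a split epimorphism.

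The obstacle is that the kernel $K$ of $\partial^{ m-1 }$ need not exist in $\cB$, since $\cB$ is only additive. I would avoid splitting off a kernel and instead modify the complex within its homotopy type. Let $e = \mathrm{id} - s\partial^{ m-1 }$, an idempotent on $B^{ m-1 }$. Then consider the shorter complex $B'$ with $B'^i = B^i$ for $i < m-1$, $B'^{ m-1 } = B^{ m-1 }$, and $B'^m = 0$, whose differential $\partial^{ m-2 }$ lands in $e$'s image. This is not literally a direct summand without idempotent completion. The cleaner route is the standard trick: $B$ is isomorphic in $\K( \cB )$ to the cone of a map, or one passes to the idempotent completion $\widetilde{ \cB }$. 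There $B \cong B' \oplus ( B^m \xrightarrow{ \mathrm{id} } B^m )$ with $B'$ shorter, and the Hom condition for $B'$ against objects of $\widetilde{ \cB }$ follows from that for $B$ (summands of objects of $\cB$ suffice). The inclusion $\K^b( \cB ) \xrightarrow{} \K^b( \widetilde{ \cB } )$ is fully faithful, so contractibility descends.

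Alternatively, one can avoid idempotent completion by building the contracting homotopy directly by downward induction. One constructs $h^i \colon B^i \xrightarrow{} B^{ i-1 }$ with $\partial h + h\partial = \mathrm{id}$ at each degree, starting with $h^m = s$. At degree $i$, apply exactness of $\Hom( B^i,B )$ at position $i$ to the map $\mathrm{id}_{ B^i } - h^{ i+1 }\partial^i$, which is a cycle because $\partial^i( \mathrm{id} - h^{ i+1 }\partial^i ) = \partial^i - ( \mathrm{id} - \partial^{ i+1 }h^{ i+2 } )\partial^i = 0$. It therefore lifts to give $h^i$. This direct construction is the approach I would actually write, as it uses only (ii) with $\widetilde{ B }$ ranging over the terms $B^i$, and needs no kernels. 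The dual argument with $\Hom( B,B^i )$, building the homotopy upward, gives (iii)$\Rightarrow$(i).
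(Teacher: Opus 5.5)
Your proof is correct, but it takes a more hands-on route than the paper.

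\textbf{The paper's route.} The paper uses the identification $\H_i\Hom_{\cB}(\widetilde{B},B) \cong \Hom_{\K(\cB)}(\Sigma^i\widetilde{B},B)$, cited from Christensen--Foxby--Holm. This rephrases (ii) as the vanishing of all morphisms in $\K(\cB)$ from shifted stalk complexes into $B$. It then argues in the triangulated category: $B$ is built from such stalk complexes by finitely many hard truncation triangles, so $\Hom_{\K(\cB)}(B,B)=0$.

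\textbf{Your route.} Your downward induction lifts the cycle $\mathrm{id}_{B^i} - h^{i+1}\partial^i$ through $\partial^{i-1}$. This is exactly that d\'evissage unwound into an explicit contracting homotopy. In the end both arguments use the same input: exactness of $\Hom_{\cB}(B^i,B)$ in degree $i$ for each $i$.

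\textbf{What each buys.} Yours is self-contained: no citation, no triangulated structure, no idempotent completion. It also proves slightly more. Your downward construction for (ii)$\Rightarrow$(i) only needs $B$ bounded on the side where the induction starts (above, in your cohomological indexing). Dually, (iii)$\Rightarrow$(i) only needs bounded below. The paper's finite-truncation argument uses boundedness on both sides. The paper's route buys brevity and a conceptual explanation.

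\textbf{Two small fixes.}
\begin{enumerate}
\item In your cycle check, the homotopy identity at degree $i+1$ should read $\partial^i h^{i+1} = \mathrm{id}_{B^{i+1}} - h^{i+2}\partial^{i+1}$. You wrote $\partial^{i+1}h^{i+2}$, which does not even typecheck as an endomorphism of $B^{i+1}$. With the correct order you get $\partial^i(\mathrm{id} - h^{i+1}\partial^i) = \partial^i - \partial^i + h^{i+2}\partial^{i+1}\partial^i = 0$, as intended.
\item The idempotent-completion detour, including the loosely described complex $B'$, can be deleted. The direct construction makes it unnecessary. It also covers the base case $m=n$ automatically: at the bottom degree the lift lands in $B^{n-1}=0$, which forces $\mathrm{id}_{B^n} = h^{n+1}\partial^n$.
\end{enumerate}
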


\begin{proof}
If $i \in \BZ$ then \cite[thm.\ 2.3.10]{Christensen-Foxby-Holm_Derived-category-methods} implies $\H_i\!\Hom_{ \cB }( \widetilde{ B },B ) \cong \Hom_{ \K( \cB ) }( \Sigma^i\widetilde{ B },B )$, where $\Sigma$ is the suspension functor of $\K( \cB )$ and $\widetilde{ B }$ is viewed as a complex concentrated in degree zero.  Hence (ii) is equivalent to the following condition.
\begin{itemize}
\setlength\itemsep{4pt}

  \item[(ii')]  For each $i \in \BZ$ and $\widetilde{ B } \in \cB$ we have $\Hom_{ \K( \cB ) }( \Sigma^i\widetilde{ B },B ) = 0$.
  
\end{itemize}
To prove (i)$\Leftrightarrow$(ii) it is hence enough to prove (i)$\Leftrightarrow$(ii'), and the implication (i)$\Rightarrow$(ii') is clear.  To prove (ii')$\Rightarrow$(i), note that in the triangulated category $\K( \cB )$, the bounded complex $B$ can be built from objects of the form $\Sigma^i\widetilde{ B }$ using finitely many hard truncation triangles.  Hence (ii') implies $\Hom_{ \K( \cB ) }( B,B ) = 0$, which is equivalent to (i).

The equivalence (i)$\Leftrightarrow$(iii) is proved similarly.
\end{proof}

\begin{Lemma}
\label{lem:splitting_implies_Ext^d_zero}
Let $\cU \subseteq \cM$ be a splitting $d$-torsion class.  Let $U \in \cU$ and $V \in \cU^{ \perp }$ be given.  Then each $d$-exact sequence in $\cM$ of the form
\begin{equation}
\label{equ:lem:splitting_implies_Ext^d_zero:10}
  0
  \xrightarrow{} U
  \xrightarrow{} M^1
  \xrightarrow{} \cdots
  \xrightarrow{} M^d
  \xrightarrow{} V
  \xrightarrow{} 0
\end{equation}
is split exact.
\end{Lemma}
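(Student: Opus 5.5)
Let $\delta$ denote the sequence \eqref{equ:lem:splitting_implies_Ext^d_zero:10}, viewed as a bounded complex over $\cM$. The plan is to show that $\delta$ is isomorphic to zero in $\K( \cM )$, which is what split exact means here. By Lemma \ref{lem:criterion_for_contractible}, it suffices to show that $\Hom_A( \widetilde{ M },\delta )$ is exact for each $\widetilde{ M } \in \cM$. By additivity, and since $\cM$ is Krull--Schmidt, it is enough to treat indecomposable $\widetilde{ M }$. By Lemma \ref{lem:criterion_for_splitting}(iii), such an $\widetilde{ M }$ lies in $\cU$ or in $\cU^{ \perp }$, so there are two cases.

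\emph{Case $\widetilde{ M } \in \cU^{ \perp }$.} Then $\Hom_A( \widetilde{ M },U ) = 0$. The sequence $\delta$ is exact in $\mod A$ with all terms in the $d$-cluster tilting subcategory $\cM$. Hence $\Hom_A( \widetilde{ M },- )$ applied to $\delta$ is exact at every position except possibly at the right end. That is, $\Hom_A( \widetilde{ M },M^d ) \to \Hom_A( \widetilde{ M },V )$ has cokernel embedding into $\operatorname{Ext}^d_A( \widetilde{ M },U )$. This is the standard fact that $d$-exact sequences in $\cM$ are left exact under $\Hom_A( \cM,- )$, with the obstruction to right exactness measured by $\operatorname{Ext}^d$; see \cite{Jasso-Kvamme_Introduction-to_higher_AR-theory}. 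So we must show that every map $\widetilde{ M } \to V$ lifts to $M^d$. Equivalently, we need the contravariant defect $\delta^*( \widetilde{ M } ) = 0$.

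\emph{Case $\widetilde{ M } \in \cU$.} Then $\Hom_A( \widetilde{ M },V ) = 0$, so right exactness at the end is automatic, and the complex $\Hom_A( \widetilde{ M },\delta )$ is exact. Thus $\delta^*$ vanishes on $\cU$, and it remains to kill $\delta^*$ on $\cU^{ \perp }$.

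The key step, which I expect to be the main obstacle, is the case $\widetilde{ M } \in \cU^{ \perp }$. There I would use the defect formula \eqref{equ:defect_formula}: $\dual\delta^*( \widetilde{ M } ) \cong \delta_*( \tau_d\widetilde{ M } )$. The covariant defect $\delta_*( N )$ is the cokernel of $\Hom_A( M^1,N ) \to \Hom_A( U,N )$. It vanishes when $\Hom_A( U,N ) = 0$, and in particular when $N \in \cU^{ \perp }$. So it would suffice to know $\tau_d\widetilde{ M } \in \cU^{ \perp }$. Since every indecomposable summand of $\tau_d\widetilde{ M }$ lies in $\cU$ or $\cU^{ \perp }$, we must rule out an indecomposable summand $N \in \cU$. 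For such an $N$, I would use Theorem A's expected direction $\tau_d^-\cU \subseteq \cU$: then $\widetilde{ M } \cong \tau_d^- N \in \cU \cap \cU^{ \perp } = 0$. That direction is not yet proved, however, so I would instead argue directly. A non-zero map $N \to \widetilde{ M }$ is not available, but the $d$-almost split sequence starting at $N$ ends at $\tau_d^- N = \widetilde{ M }$. Its middle terms receive a non-zero map from $N \in \cU$. Pushing $\cU$ through the canonical sequence structure then gives a non-zero map from $\cU$ into $\widetilde{ M }$, a contradiction.

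If this route stalls, the fallback is a different argument for the same case. I would split $M^1$ and $M^d$ into their $\cU$ and $\cU^{ \perp }$ parts via Lemma \ref{lem:criterion_for_splitting}(ii). I would then use $\Hom( \cU,\cU^{ \perp } ) = 0$ to peel off the contractible summand $U \to U$, and proceed by induction on the length of $\delta$.
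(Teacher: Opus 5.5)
The gap is where you expected it, in the case $\widetilde{M}\in\cU^{\perp}$, and neither route closes it. Your main route needs $\tau_d\widetilde{M}\in\cU^{\perp}$. As you observe, this amounts to $\tau_d^-\cU\subseteq\cU$, i.e.\ (i)$\Rightarrow$(ii) of Theorem~\ref{thm:generalisation_of_ASS_VI1.7}. The paper proves that implication by applying the present lemma to a $d$-Auslander--Reiten sequence $0\to N\to E^1\to\cdots\to E^d\to\tau_d^-N\to 0$ with $N\in\cU$. So what you need is a special case of the statement you are proving, and it has to be supplied independently.

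Your ``direct'' argument does not supply it. Knowing that $E^1$ receives a non-zero map from $N$ does not propagate: composites along the sequence vanish, and a summand of $E^1$ could a priori lie in $\cU^{\perp}$ with zero component from $N$. Canonical $d$-exact sequences play no role here. To make the step rigorous you would need two further inputs:
\begin{enumerate}
\item \emph{every} indecomposable summand of $E^1$ receives a non-zero component from $N$ (the minimality of $d$-Auslander--Reiten sequences used in the proof of Theorem~\ref{thm:spine}), so $E^1\in\cU$ by Lemma~\ref{lem:criterion_for_splitting}(iii);
\item closure of $d$-torsion classes under $d$-quotients, which then gives $\tau_d^-N\in\cU$.
\end{enumerate}
With these your route is correct, but it is much heavier than necessary, and none of it is in your sketch. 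The fallback has the same hole: nothing explains why the component $U\to U^1$ should be a split monomorphism, and that is the whole content. There is also a small, unused slip: $\widetilde{M}\in\cU^{\perp}$ gives $\Hom_A(U,\widetilde{M})=0$, not $\Hom_A(\widetilde{M},U)=0$.

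The missing idea is not to apply Lemma~\ref{lem:criterion_for_contractible} to $\delta$ with $\cB=\cM$. Doing so forces you to test covariantly against $\cU^{\perp}$, where you have no control. Instead decompose \emph{every} term as $M^i=U^i\oplus V^i$ with $U^i\in\cU$ and $V^i\in\cU^{\perp}$. Since $\Hom_A(\cU,\cU^{\perp})=0$, every component $U^i\to V^{i+1}$ of the differentials vanishes, including $U\to V^1$ and $U^d\to V$. So $\delta$ is a degreewise split extension $0\to U^*\to\delta\to V^*\to 0$, where $U^*=(U\to U^1\to\cdots\to U^d)$ is a complex over $\cU$ and $V^*=(V^1\to\cdots\to V^d\to V)$ is a complex over $\cU^{\perp}$.

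For $\widetilde{U}\in\cU$, the complex $\Hom_A(\widetilde{U},\delta)$ is exact (this is your own case $\widetilde{M}\in\cU$) and $\Hom_A(\widetilde{U},V^*)=0$. Hence $\Hom_A(\widetilde{U},U^*)$ is exact, and Lemma~\ref{lem:criterion_for_contractible} with $\cB=\cU$ makes $U^*$ contractible. Dually, for $\widetilde{V}\in\cU^{\perp}$, the complex $\Hom_A(\delta,\widetilde{V})$ is exact, because its only possibly non-exact spot is $\Hom_A(U,\widetilde{V})=0$. Also $\Hom_A(U^*,\widetilde{V})=0$, so $V^*$ is contractible by the lemma with $\cB=\cU^{\perp}$. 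Therefore $\delta$ is contractible, i.e.\ split exact, with no Auslander--Reiten theory at all; this is the paper's proof.
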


\begin{proof}
By Lemma \ref{lem:criterion_for_splitting}(ii) we can write $M^i = U^i \oplus V^i$ with $U^i \in \cU$ and $V^i \in \cU^{ \perp }$ for each $i$.  Hence \eqref{equ:lem:splitting_implies_Ext^d_zero:10} can be spun into the following commutative diagram where the vertical morphisms are inclusions and projections.
\[
  \begin{tikzcd}
	0 & U & {U^1} & \cdots & {U^d} & 0 & 0 \\
	0 & U & {U^1 \coprod V^1} & \cdots & {U^d \coprod V^d} & V & 0 \\
	0 & 0 & {V^1} & \cdots & {V^d} & V & 0
	\arrow[from=1-1, to=1-2]
	\arrow[from=1-2, to=1-3]
	\arrow[from=1-2, to=2-2]
	\arrow[from=1-3, to=1-4]
	\arrow[from=1-3, to=2-3]
	\arrow[from=1-4, to=1-5]
	\arrow[from=1-5, to=1-6]
	\arrow[from=1-5, to=2-5]
	\arrow[from=1-6, to=1-7]
	\arrow[from=1-6, to=2-6]
	\arrow[from=2-1, to=2-2]
	\arrow[from=2-2, to=2-3]
	\arrow[from=2-2, to=3-2]
	\arrow[from=2-3, to=2-4]
	\arrow[from=2-3, to=3-3]
	\arrow[from=2-4, to=2-5]
	\arrow[from=2-5, to=2-6]
	\arrow[from=2-5, to=3-5]
	\arrow[from=2-6, to=2-7]
	\arrow[from=2-6, to=3-6]
	\arrow[from=3-1, to=3-2]
	\arrow[from=3-2, to=3-3]
	\arrow[from=3-3, to=3-4]
	\arrow[from=3-4, to=3-5]
	\arrow[from=3-5, to=3-6]
	\arrow[from=3-6, to=3-7]
  \end{tikzcd}
\]
Extending by zeroes to the left and right, each line in the diagram is a chain complex over $\cM\!$.  The whole diagram (rotated anticlockwise by $90$ degrees) is a semisplit short exact sequence
\begin{equation}
\label{equ:lem:splitting_implies_Ext^d_zero:20}
  0 \xrightarrow{} U^* \xrightarrow{} E^* \xrightarrow{} V^* \xrightarrow{} 0
\end{equation}
of chain complexes over $\cM\!$.

Let $\widetilde{ U } \in \cU$ be given.  Then the complex
\begin{equation}
\label{equ:lem:splitting_implies_Ext^d_zero:30}
  \Hom_A( \widetilde{ U },V^* )
\end{equation}
is exact; in fact, it is zero since each object in $V^*$ is in $\cU^{ \perp }$.  The complex
\begin{equation}
\label{equ:lem:splitting_implies_Ext^d_zero:40}
  \Hom_A( \widetilde{ U },E^* )
\end{equation}
is also exact because the sequence
\[
  0
  \xrightarrow{} \Hom_A( \widetilde{ U },U )
  \xrightarrow{} \Hom_A( \widetilde{ U },U^1 \coprod V^1 )
  \xrightarrow{} \cdots
  \xrightarrow{} \Hom_A( \widetilde{ U },U^d \coprod V^d )
  \xrightarrow{} \Hom_A( \widetilde{ U },V )  
\]
is exact since $E^*$ is a $d$-exact sequence, and because we have $\Hom_A( \widetilde{ U },V ) = 0$ by $\widetilde{ U } \in \cU$ and $V \in \cU^{ \perp }$.  Since the complexes \eqref{equ:lem:splitting_implies_Ext^d_zero:30} and \eqref{equ:lem:splitting_implies_Ext^d_zero:40} are exact, the long exact sequence induced by \eqref{equ:lem:splitting_implies_Ext^d_zero:20} implies that $\Hom_A( \widetilde{ U },U^* )$ is exact.  Since this holds for each $\widetilde{ U } \in \cU$ while each object in $U^*$ is in $\cU$, Lemma \ref{lem:criterion_for_contractible} implies that $U^*$ is isomorphic to zero in $\K( \cU )$, hence isomorphic to zero in $\K( \mod A )$.

A symmetric argument using $\Hom_A( -,\widetilde{ V } )$ with $\widetilde{ V } \in \cU^{ \perp }$ shows $V^*$ isomorphic to zero in $K( \mod A )$.

Since \eqref{equ:lem:splitting_implies_Ext^d_zero:20} is a semisplit short exact sequence of chain complexes over $\cM$ and hence over $\mod A$, it induces a triangle $U^* \xrightarrow{} E^* \xrightarrow{} V^* \xrightarrow{} \Sigma U^*$ in the triangulated category $\K( \mod A )$.  Hence $E^*$ is isomorphic to zero in $\K( \mod A )$ because so are $U^*$ and $V^*$.  But $E^*$ is \eqref{equ:lem:splitting_implies_Ext^d_zero:10} extended by zeroes to the left and right, so \eqref{equ:lem:splitting_implies_Ext^d_zero:10} is split exact.
\end{proof}

\begin{Lemma}
\label{lem:splitting_vs_defects}
Let $\cU \subseteq \cM$ be a $d$-torsion class.  The following conditions are equivalent:
\begin{enumerate}
\setlength\itemsep{4pt}

  \item  $\cU$ is splitting.
  
  \item  If $\delta$ is a canonical $d$-exact sequence with respect to $\cU$ then $\delta_*( \cU ) = 0$.

  \item  If $\delta$ is a canonical $d$-exact sequence with respect to $\cU$ then $\delta^*( \tau_d^-\cU ) = 0$.

\end{enumerate}
\end{Lemma}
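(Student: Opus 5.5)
The plan is to unwind the definition of the defect functors and use the dual defect formula \eqref{equ:dual_defect_formula}. Recall from \cite[def.\ 3.1]{Jasso-Kvamme_Introduction-to_higher_AR-theory} that for a $d$-exact sequence $\delta = 0 \xrightarrow{} M^0 \xrightarrow{} M^1 \xrightarrow{} \cdots \xrightarrow{} M^{d+1} \xrightarrow{} 0$ we have
\[
  \delta_*( X ) = \operatorname{Coker}\big( \Hom_A( M^1,X ) \xrightarrow{} \Hom_A( M^0,X ) \big),
  \qquad
  \delta^*( X ) = \operatorname{Coker}\big( \Hom_A( X,M^d ) \xrightarrow{} \Hom_A( X,M^{d+1} ) \big).
\]
For a canonical $d$-exact sequence \eqref{equ:canonical_sequence} we have $M^0 = U$, $M^1 = M$, and the first map is $\upsilon$. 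Hence $\delta_*( \widetilde{ U } ) = 0$ says precisely that every morphism $U \xrightarrow{} \widetilde{ U }$ factors through $\upsilon$.

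(i)$\Rightarrow$(ii): Suppose $\cU$ is splitting and $\delta$ is a canonical $d$-exact sequence. By Remark \ref{rmk:d-torsion_classes}(iii), $\upsilon$ is a split monomorphism. Let $\rho$ be a retraction with $\rho\upsilon = 1_U$. Then any morphism $f \colon U \xrightarrow{} \widetilde{ U }$ equals $( f\rho )\upsilon$, so it factors through $\upsilon$. Thus $\delta_*( \widetilde{ U } ) = 0$, and in fact $\delta_*$ vanishes identically.

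(ii)$\Rightarrow$(i): Let $M \in \cM$ and choose a canonical $d$-exact sequence $\delta$ of $M$ with respect to $\cU$; one exists since $\cU$ is a $d$-torsion class. Since $U \in \cU$, condition (ii) gives $\delta_*( U ) = 0$. Hence $1_U$ factors through $\upsilon$, so $\upsilon$ is a split monomorphism, and $\cU$ is splitting by Remark \ref{rmk:d-torsion_classes}(iii).

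(ii)$\Leftrightarrow$(iii): Apply \eqref{equ:dual_defect_formula}, which gives $\dual\delta_*( \widetilde{ U } ) \cong \delta^*( \tau_d^-\widetilde{ U } )$ for each $\widetilde{ U } \in \cU$. Since $\dual$ is a duality on finite dimensional vector spaces, $\delta_*( \widetilde{ U } ) = 0$ if and only if $\delta^*( \tau_d^-\widetilde{ U } ) = 0$.

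The only point I expect to need care is the bookkeeping between objects of $\cM$ and objects of the stable categories $\overline{ \cM }$ and $\underline{ \cM }$, where the formula lives. I would check that $\delta_*$ kills injective modules, which holds because $\upsilon$ is a monomorphism, so $\Hom_A( -,I )$ sends it to a surjection. Similarly, $\delta^*$ kills projective modules, because $M^d \xrightarrow{} M^{d+1}$ is an epimorphism. Consequently both defect functors are well defined on the stable categories. Also, the convention in Remark \ref{rmk:tau_d}(v) for choosing $\tau_d^-\widetilde{ U }$ inside $\cM$ does not affect the value of $\delta^*$. With this in place, the equivalence is just a restatement of (ii).
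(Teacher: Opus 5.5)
Your proposal is correct and follows the paper's proof essentially step for step. In both, (i)$\Leftrightarrow$(ii) comes from reading $\delta_*$ as the cokernel of $\upsilon^*$: a split $\upsilon$ makes $\upsilon^*$ surjective, and $\delta_*(U)=0$ lets you lift $1_U$ through $\upsilon$. In both, (ii)$\Leftrightarrow$(iii) is the dual defect formula \eqref{equ:dual_defect_formula}. Your extra check is also fine: $\delta_*$ kills injectives and $\delta^*$ kills projectives, so the formula applies to objects of $\cM$ whichever representative of $\tau_d^-\widetilde{U}$ you pick. The paper leaves this point implicit.
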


\begin{proof}
We will use the notation
\[
  \delta 
  \;\;=\;\; 0
  \xrightarrow{} U
  \xrightarrow{ \upsilon } M
  \xrightarrow{} V^1
  \xrightarrow{} \cdots
  \xrightarrow{} V^d
  \xrightarrow{} 0.
\]
By \cite[def.\ 3.1]{Jasso-Kvamme_Introduction-to_higher_AR-theory} the covariant defect functor $\delta_*$ is defined by the exact sequence
\[
  \Hom_A( M,- )
  \xrightarrow{ \upsilon^* }
  \Hom_A( U,- )
  \xrightarrow{}
  \delta_*( - )
  \xrightarrow{}
  0.
\]

(i)$\Rightarrow$(ii):  Assume that (i) is satisfied.  For each canonical $d$-exact sequence $\delta$, the homomorphism $\upsilon$ is a split monomorphism whence $\upsilon^*$ is surjective, so we have $\delta_* = 0$ and in particular $\delta_*( \cU ) = 0$.  This proves (ii).

(ii)$\Rightarrow$(i):  Assume that (ii) is satisfied.   For each canonical $d$-exact sequence $\delta$, we have $\delta_*( U ) = 0$ in particular, whence
\[
  \Hom_A( M,U )
  \xrightarrow{ \Hom_A( \upsilon,U ) }
  \Hom_A( U,U )
\]  
is surjective so $\upsilon$ is a split monomorphism.  This proves (i).

(ii)$\Leftrightarrow$(iii) holds by the dual defect formula, Equation \eqref{equ:dual_defect_formula}.
\end{proof}

\begin{Lemma}
\label{lem:surjective_simultaneously}
Let $Y \stackrel{ \upsilon }{ \twoheadrightarrow } Y''$ be a surjection in $\mod\,A$.  For each $X \in \mod\,A$, the induced homomorphisms
\[
  \Hom_A( X,Y ) \xrightarrow{ \alpha } \Hom_A( X,Y'' )
\]
and
\[
  \underline{ \Hom }_A( X,Y ) \xrightarrow{ \beta } \underline{ \Hom }_A( X,Y'' )
\]
are surjective simultaneously.
\end{Lemma}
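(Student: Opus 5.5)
We compare $\alpha$ and $\beta$ via the quotient maps $\Hom_A( X,- ) \twoheadrightarrow \underline{ \Hom }_A( X,- )$. These give a commutative square
\[
  \begin{tikzcd}
	{\Hom_A( X,Y )} & {\Hom_A( X,Y'' )} \\
	{\underline{ \Hom }_A( X,Y )} & {\underline{ \Hom }_A( X,Y'' )}
	\arrow["\alpha", from=1-1, to=1-2]
	\arrow[two heads, from=1-1, to=2-1]
	\arrow[two heads, from=1-2, to=2-2]
	\arrow["\beta", from=2-1, to=2-2]
  \end{tikzcd}
\]
whose vertical maps are surjective by definition of the projectively stabilised $\Hom$.

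\emph{$\alpha$ surjective implies $\beta$ surjective.} This is immediate from the square. The composite $\Hom_A( X,Y ) \xrightarrow{} \underline{ \Hom }_A( X,Y'' )$ is surjective, being equal to a composite of two surjections. Since it factors through $\beta$, the map $\beta$ is surjective.

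\emph{$\beta$ surjective implies $\alpha$ surjective.} This is the only place the hypothesis that $\upsilon$ is surjective enters, and it is the main step. Let $f \in \Hom_A( X,Y'' )$ be given. Surjectivity of $\beta$ gives $g \in \Hom_A( X,Y )$ with $\upsilon g - f$ factoring through a projective module. So we can write
\[
  \upsilon g - f = q p
\]
with $X \xrightarrow{ p } P \xrightarrow{ q } Y''$ and $P$ projective. Because $\upsilon$ is surjective and $P$ is projective, $q$ lifts to some $q' \colon P \xrightarrow{} Y$ with $\upsilon q' = q$. Then
\[
  \upsilon( g - q'p ) = \upsilon g - qp = f,
\]
so $f = \alpha( g - q'p )$ and $\alpha$ is surjective.

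I expect no real obstacle beyond noticing that the projective lifting along the surjection $\upsilon$ corrects the error term.
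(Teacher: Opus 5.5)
Your proof is correct and takes the same approach as the paper. The forward direction uses the quotient maps to the projectively stabilised $\Hom$. The reverse direction lifts the error term $\upsilon g - f = qp$ along the surjection $\upsilon$ using projectivity of $P$, which gives $f = \alpha( g - q'p )$; the only difference is that you phrase the first direction through the commutative square rather than element-wise.
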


\begin{proof}
If $\xi$ is a homomorphism in $\Hom_A$, then $\underline{ \xi }$ will denote its class in $\underline{ \Hom }_A$.  

Suppose $\alpha$ is surjective.  Let $\underline{ \xi }'' \in \underline{ \Hom }_A( X,Y'' )$ be given.  Pick $\xi \in \Hom_A( X,Y )$ such that $\alpha( \xi ) = \xi''$.  Then $\beta( \underline{ \xi } ) = \underline{ \xi }''$.  This proves $\beta$ is surjective.

Suppose $\beta$ is surjective.  Let $\xi'' \in \Hom_A( X,Y'' )$ be given.  Pick $\underline{ \xi } \in \underline{ \Hom }_A( X,Y )$ such that $\beta( \underline{ \xi } ) = \underline{ \xi }''$.  This equation means that $\alpha( \xi )$ and $\xi''$ become equal when projected to $\underline{ \Hom }_A( X,Y'' )$, whence $\alpha( \xi ) - \xi''$ factorises through a projective module $P$ as follows.
\[
\begin{tikzcd}
	&& Y \\
	X & P \\
	&& {Y''}
	\arrow["\upsilon", two heads, from=1-3, to=3-3]
	\arrow["\varphi", dashed, from=2-1, to=1-3]
	\arrow[from=2-1, to=2-2]
	\arrow["{\alpha( \xi ) - \xi''}"', from=2-1, to=3-3]
	\arrow[dotted, from=2-2, to=1-3]
	\arrow[from=2-2, to=3-3]
\end{tikzcd}
\]
The dotted arrow exists because $P$ is projective and $\upsilon$ is surjective.  It induces the dashed arrow $\varphi$ whence $\alpha( \xi ) - \xi'' = \upsilon\varphi$.  Hence we have $\xi'' = \alpha( \xi ) - \upsilon\varphi = \alpha( \xi - \varphi )$.  This proves $\alpha$ is surjective.
\end{proof}

\begin{Theorem}
\label{thm:generalisation_of_ASS_VI1.7}
Let $\cU \subseteq \cM$ be a $d$-torsion class.  The following two conditions are equivalent:
\begin{enumerate}
\setlength\itemsep{4pt}

  \item  $\cU$ is splitting.
  
  \item  $\tau_d^-\cU \subseteq \cU\!$.

\end{enumerate}
Now assume $d \geqslant 2$.  Then conditions (i) and (ii) are also equivalent to the following:
\begin{enumerate}
\setcounter{enumi}{2}
\setlength\itemsep{4pt}

  \item  If $\delta = 0 \xrightarrow{} U \xrightarrow{} M \xrightarrow{} V^1 \xrightarrow{} \cdots \xrightarrow{} V^d \xrightarrow{} 0$ is a canonical $d$-exact sequence with respect to $\cU$, then
\[  
  \overline{ \Hom }_A( \widetilde{ U },\tau_dV^{ d-1 } ) \xrightarrow{} \overline{ \Hom }_A( \widetilde{ U },\tau_dV^{ d } )
\]  
is surjective for each $\widetilde{ U } \in \cU$.  

\end{enumerate}
\end{Theorem}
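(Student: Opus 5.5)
The plan is to prove (i)$\Rightarrow$(ii) with the splitting Lemma \ref{lem:splitting_implies_Ext^d_zero} applied to $d$-Auslander--Reiten sequences. For (ii)$\Rightarrow$(i) and for (iii) I will translate everything into vanishing of the contravariant defect $\delta^*$ and then use Lemma \ref{lem:splitting_vs_defects}.

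\emph{(i)$\Rightarrow$(ii).} Assume $\cU$ is splitting. Let $U \in \cU$ be indecomposable and non-injective, and put $X = \tau_d^- U$. With the stipulation of Remark \ref{rmk:tau_d}(v), $X$ is indecomposable and non-projective. Injective summands of objects of $\cU$ contribute nothing to $\tau_d^-\cU$, so it suffices to show $X \in \cU$. By Lemma \ref{lem:criterion_for_splitting}(iii), either $X \in \cU$ or $X \in \cU^{\perp}$. Suppose $X \in \cU^{\perp}$. Since $\tau_d X \cong U$, Iyama's $d$-Auslander--Reiten theory gives a $d$-Auslander--Reiten sequence $0 \to U \to M^1 \to \cdots \to M^d \to X \to 0$ in $\cM$, and this sequence is not split. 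But Lemma \ref{lem:splitting_implies_Ext^d_zero} says that any such $d$-exact sequence from an object of $\cU$ to an object of $\cU^{\perp}$ is split exact. This contradiction gives $X \in \cU$.

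\emph{(ii)$\Rightarrow$(i).} By Lemma \ref{lem:splitting_vs_defects}(iii), it is enough to show $\delta^*(\tau_d^-\cU) = 0$ for every canonical $\delta$. Recall that $\delta^*(-)$ is the cokernel of $\Hom_A(-,V^{d-1}) \to \Hom_A(-,V^d)$, where $V^0 := M$ when $d = 1$.
\begin{itemize}
\item For $\widetilde{U} \in \cU$, the defining property of a canonical sequence says that $\Hom_A(\widetilde{U},V^{d-1}) \to \Hom_A(\widetilde{U},V^d)$ is surjective; for $d=1$ this reads $\Hom_A(\widetilde{U},V^1)=0$. Hence $\delta^*(\cU) = 0$.
\item $\delta^*$ vanishes on projective modules, because $V^{d-1} \to V^d$ is surjective.
\end{itemize}
Now $\tau_d^-\cU \subseteq \cU$ holds at least modulo projective summands, so $\delta^*(\tau_d^-\cU) = 0$, and $\cU$ is splitting.

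\emph{(iii)$\Leftrightarrow$(i) for $d \geqslant 2$.} Use the adjunction $\tau_d^- \dashv \tau_d$ from Remark \ref{rmk:tau_d}(iii). Naturality in the second variable gives a commutative square identifying
\[
  \overline{\Hom}_A(\widetilde{U},\tau_d V^{d-1}) \to \overline{\Hom}_A(\widetilde{U},\tau_d V^d)
  \quad\text{with}\quad
  \underline{\Hom}_A(\tau_d^-\widetilde{U},V^{d-1}) \to \underline{\Hom}_A(\tau_d^-\widetilde{U},V^d).
\]
Since $V^{d-1} \to V^d$ is a surjection, Lemma \ref{lem:surjective_simultaneously} shows that the second map is surjective exactly when $\Hom_A(\tau_d^-\widetilde{U},V^{d-1}) \to \Hom_A(\tau_d^-\widetilde{U},V^d)$ is surjective. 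That is the statement $\delta^*(\tau_d^-\widetilde{U}) = 0$. So (iii) is literally condition (iii) of Lemma \ref{lem:splitting_vs_defects}, which is equivalent to (i). The hypothesis $d \geqslant 2$ just makes $V^{d-1}$ one of the $V$-terms, so the statement of (iii) makes sense as written.

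The main point needing care is this adjunction step. I must check that the adjunction isomorphism is natural for the morphism $V^{d-1} \to V^d$ in $\underline{\mod}\,A$. I must also check that the stipulations of Remark \ref{rmk:tau_d}(v) (no projective or injective summands) do not affect the stable Hom groups. Both should follow because these Hom groups only see objects up to isomorphism in the stable categories.
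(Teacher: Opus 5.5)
Your proposal is correct and follows the paper's proof essentially step by step. You prove (i)$\Rightarrow$(ii) via $d$-Auslander--Reiten sequences and Lemma \ref{lem:splitting_implies_Ext^d_zero}, and you obtain both (ii)$\Rightarrow$(i) and (iii)$\Leftrightarrow$(i) by reducing to $\delta^*(\tau_d^-\cU)=0$ in Lemma \ref{lem:splitting_vs_defects}, using the adjunction square and Lemma \ref{lem:surjective_simultaneously}; the only minor difference is that your convention $V^0 := M$ lets you treat $d=1$ uniformly, whereas the paper cites Assem--Simson--Skowro\'nski for that case.
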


\begin{proof}
If $d=1$ then the result (i)$\Leftrightarrow$(ii) is contained in \cite[prop.\ VI.1.7]{Assem-Simson-Skowronski_Elements-I}, so we assume $d \geqslant 2$ for the rest of the proof.

(i)$\Rightarrow$(ii):  Assume that (i) is satisfied.  Let $U \in \cU$ be indecomposable.  It is enough to show $\tau_d^-U \in \cU$.  If $U$ is injective then $\tau_d^-U = 0$ is clearly in $\cU\!$, so let us assume that $U$ is non-injective.  By \cite[thm.\ 3.3.1]{Iyama_Higher-dimensional-AR-theory} there is a $d$-Auslander--Reiten sequence in $\cM\!$,
\begin{equation}
\label{equ:thm:generalisation_of_ASS_VI1.7:10}
  0
  \xrightarrow{} U 
  \xrightarrow{} M^1 
  \xrightarrow{} \cdots
  \xrightarrow{} M^d
  \xrightarrow{} \tau_d^-U 
  \xrightarrow{} 0.
\end{equation}
Lemma \ref{lem:criterion_for_splitting}(iii) gives $\tau_d^-U \in \cU$ or $\tau_d^-U \in \cU^{ \perp }$.  But the latter would imply that \eqref{equ:thm:generalisation_of_ASS_VI1.7:10} were split exact by Lemma \ref{lem:splitting_implies_Ext^d_zero}.  This is false by definition of $d$-Auslander--Reiten sequences, see \cite[p.\ 33]{Iyama_Higher-dimensional-AR-theory}, so we conclude $\tau_d^-U \in \cU$.  We have proved (ii).

Before continuing, we claim that condition (i) in the theorem is equivalent to the following:
\begin{itemize}
\setlength\itemsep{4pt}

  \item[(i')]  If $\delta = 0 \xrightarrow{} U \xrightarrow{} M \xrightarrow{} V^1 \xrightarrow{} \cdots \xrightarrow{} V^d \xrightarrow{} 0$ is a canonical $d$-exact sequence with respect to $\cU$, then
\[  
  \Hom_A( \tau_d^-\widetilde{ U },V^{ d-1 } )
  \xrightarrow{}
  \Hom_A( \tau_d^-\widetilde{ U },V^{ d } )
\]  
is surjective for each $\widetilde{ U } \in \cU$.  

\end{itemize}
To show this, observe that when $\delta$ is as in (i'), the contravariant defect functor $\delta^*$ is defined by the exact sequence
\[
  \Hom_A( -,V^{ d-1 } )
  \xrightarrow{}
  \Hom_A( -,V^d )
  \xrightarrow{}
  \delta^*( - )
  \xrightarrow{}
  0
\]
by \cite[def.\ 3.1]{Jasso-Kvamme_Introduction-to_higher_AR-theory}.  Hence the surjectivity required in (i') is equivalent to $\delta^*( \tau_d^-\cU ) = 0$, so (i') is equivalent to Lemma \ref{lem:splitting_vs_defects}, condition (iii).  This is again equivalent to Lemma \ref{lem:splitting_vs_defects}, condition (i), which is equal to condition (i) in the theorem.

(ii)$\Rightarrow$(i'):  This is clear.

(i')$\Leftrightarrow$(iii):  When $\delta$ is as in (i') and $\widetilde{ U } \in \cU$ is given, there is the following commutative diagram where $\alpha$, $\beta$, $\gamma$ are induced by $V^{ d-1 } \xrightarrow{} V^d$, and the vertical isomorphisms are given by Remark \ref{rmk:tau_d}(iii).
\[
\begin{tikzcd}
	{\Hom_A( \tau_d^-\widetilde{ U },V^{ d-1 } )} && {\Hom_A( \tau_d^-\widetilde{ U },V^{ d } )} \\
	{\underline{ \Hom }_A( \tau_d^-\widetilde{ U },V^{ d-1 } )} && {\underline{ \Hom }_A( \tau_d^-\widetilde{ U },V^{ d } )} \\
	{\overline{ \Hom }_A( \widetilde{ U },\tau_dV^{ d-1 } )} && {\overline{ \Hom }_A( \widetilde{ U },\tau_dV^{ d } )}
	\arrow["\alpha", from=1-1, to=1-3]
	\arrow[two heads, from=1-1, to=2-1]
	\arrow[two heads, from=1-3, to=2-3]
	\arrow["\beta", from=2-1, to=2-3]
	\arrow["{\rotatebox{270}{$\cong$}}", from=2-1, to=3-1]
	\arrow["{\rotatebox{270}{$\cong$}}", from=2-3, to=3-3]
	\arrow["\gamma"', from=3-1, to=3-3]
\end{tikzcd}
\]
We must show that $\alpha$ and $\gamma$ are surjective simultaneously.  This is equivalent to showing that $\alpha$ and $\beta$ are surjective simultaneously, which is true by Lemma \ref{lem:surjective_simultaneously}.
\end{proof}

\section{The spine of the lattice of $d$-torsion classes}
\label{sec:spine}

Recall from Remark \ref{rmk:cycles} the notion of a cycle in $\cM$.

\begin{Lemma}
\label{lem:deleting_an_indecomposable_from_a_set}
Assume that $\cM$ has no cycles.  

Let $\cX \subseteq \cM$ be a finite set of pairwise non-isomorphic indecomposable modules.  Then there exists $X \in \cX$ such that $X \in ( \cX \setminus X )^{ \perp }$.
\end{Lemma}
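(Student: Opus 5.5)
We argue by contradiction, using the absence of cycles to rule out an infinite (hence repeating) chain of non-zero morphisms. Suppose that no $X \in \cX$ lies in $( \cX \setminus X )^{ \perp }$. By definition of $( - )^{ \perp }$, this means that for each $X \in \cX$ there is some $X' \in \cX \setminus X$ with $\Hom_A( X',X ) \neq 0$. Here $\cX \setminus X$ is understood as the set $\cX$ with $X$ removed. Because the elements of $\cX$ are pairwise non-isomorphic, $X' \not\cong X$.

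Now build a backwards chain. Start with any $X_0 \in \cX$; note that $\cX$ must be non-empty for the statement to make sense, and if $\cX$ has one element then $( \cX \setminus X )^{ \perp } = \cM$ and we are done trivially. Choose $X_1 \in \cX \setminus X_0$ and a non-zero $\mu_0 : X_1 \xrightarrow{} X_0$. Then choose $X_2 \in \cX \setminus X_1$ and a non-zero $\mu_1 : X_2 \xrightarrow{} X_1$, and so on. Each $\mu_i$ is non-zero, and it is not an isomorphism because its source and target are non-isomorphic indecomposables.

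Since $\cX$ is finite, some element repeats: there are indices $i < j$ with $X_i = X_j$. Then
\[
  X_j \xrightarrow{ \mu_{ j-1 } } X_{ j-1 } \xrightarrow{} \cdots \xrightarrow{ \mu_i } X_i = X_j
\]
is a sequence of length $t = j-i \geqslant 1$ of non-zero non-isomorphisms between indecomposable modules in $\cM$. This is a cycle from $X_j$ to $X_j$ in the sense of Remark \ref{rmk:cycles}, contradicting the assumption that $\cM$ has no cycles.

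The only point needing care is that each $\mu_i$ is a non-isomorphism. This follows because consecutive terms are distinct elements of a set of pairwise non-isomorphic modules. Intermediate repetitions in the chain cause no trouble, since the definition of a cycle does not require the $M_i$ to be distinct; alternatively, one can take $j$ to be the first index at which a repetition occurs. I do not expect any genuine obstacle.
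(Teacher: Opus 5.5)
Your proof is correct and follows essentially the same argument as the paper: assume the conclusion fails, build a backwards chain $\cdots \to X_2 \to X_1 \to X_0$ of non-zero non-isomorphisms in $\cX$, and use finiteness to extract a cycle. Your version is slightly more careful than the paper's, which asserts $X_t = X_0$ for some $t \geqslant 1$. Finiteness only guarantees a repetition $X_i = X_j$ with $i < j$, and that is exactly what you use.
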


\begin{proof}
If the conclusion of the lemma were false, then each $X \in \cX$ would be outside $( \cX \setminus X )^{ \perp }$.  That is, for each $X \in \cX$ there would exist $X' \in \cX \setminus X$ and a homomorphism $X' \xrightarrow{} X$ that was not zero.  Since $X'$ and $X$ are different, hence non-isomorphic, the homomorphism would also not be an isomorphism.  Starting with an arbitrary $X_0 \in \cX\!$, we would hence be able to construct a sequence
\[
  \cdots \xrightarrow{} X_2 \xrightarrow{} X_1 \xrightarrow{} X_0
\]
with each $X_i$ in $\cX$ and each homomorphism not zero and not an isomorphism.  Since $\cX$ is finite, we would have $X_t = X_0$ for some $t \geqslant 1$, but this would contradict that $\cM$ has no cycles.
\end{proof}

\begin{Lemma}
\label{lem:deleting_an_indecomposable_from_a_d-torsion_class}
Assume that $\cM$ has no cycles.  

Let $\cV \subseteq \cW$ be a pair of splitting $d$-torsion classes differing by finitely many indecomposable modules.  

Pick a finite set $\cY \subseteq \cW \setminus \cV$ of pairwise non-isomorphic indecomposable modules such that $\add( \cY,\cV ) = \cW$.  Let $y \geqslant 0$ be the cardinality of $\cY\!$.

Then there exists a chain of splitting $d$-torsion classes in $\cM\!$,
\begin{equation}
\label{equ:lem:deleting_an_indecomposable_from_a_d-torsion_class:10}	
  \cV = \cW_y \subset \cdots \subset \cW_1 \subset \cW_0 = \cW\!,
\end{equation}
such that each is obtained by dropping one indecomposable module in $\cY$ from the class immediately above it.  That is, the inclusions are sharp, and there is a numbering $Y_1, \ldots, Y_y$ of the elements of $\cY$ such that $\add( Y_{ i+1 },\cW_{ i+1 } ) = \cW_i$ for $i \in \{\, 0, \ldots, y-1 \,\}$.
\end{Lemma}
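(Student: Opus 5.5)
We argue by induction on $y$. If $y = 0$ then $\cV = \cW$ and there is nothing to prove, so assume $y \geqslant 1$. It suffices to find one $Y_1 \in \cY$ such that $\cW_1 := \add( \cY \setminus Y_1,\cV )$ is a splitting $d$-torsion class. Then $\cV \subseteq \cW_1$ is again a pair of splitting $d$-torsion classes. They differ by the $y-1$ modules in $\cY \setminus Y_1$, and $\add( \cY \setminus Y_1,\cV ) = \cW_1$, so the induction hypothesis gives the rest of the chain. The inclusion $\cW_1 \subset \cW$ is sharp because $Y_1 \notin \cV$ and the elements of $\cY$ are pairwise non-isomorphic. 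Krull--Schmidt therefore gives $Y_1 \notin \cW_1$.

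To choose $Y_1$, we apply Lemma \ref{lem:deleting_an_indecomposable_from_a_set} to $\cY$. This gives $Y_1 \in \cY$ with $\Hom_A( \cY \setminus Y_1,Y_1 ) = 0$. We also need $\Hom_A( \cV,Y_1 ) = 0$. This follows from Lemma \ref{lem:criterion_for_splitting}(iii) applied to the splitting class $\cV$: the indecomposable $Y_1$ lies either in $\cV$ or in $\cV^{\perp}$, and $Y_1 \notin \cV$ since $Y_1 \in \cW \setminus \cV$ and $\cV$ is closed under summands and isomorphisms. Hence $Y_1 \in \cW_1^{\perp}$.

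We now verify that $\cW_1$ is splitting using Lemma \ref{lem:criterion_for_splitting}, (iii)$\Rightarrow$(i). Note that $\cW_1$ is additive by construction. Let $M \in \cM$ be indecomposable, and consider three cases.
\begin{enumerate}
\item If $M \in \cW_1$ we are done.
\item If $M \in \cW \setminus \cW_1$, then by Krull--Schmidt $M \cong Y_1$, which lies in $\cW_1^{\perp}$ by the previous paragraph.
\item If $M \notin \cW$, then $M \in \cW^{\perp}$ because $\cW$ is splitting, and $\cW^{\perp} \subseteq \cW_1^{\perp}$ since $\cW_1 \subseteq \cW$.
\end{enumerate}
So every indecomposable lies in $\cW_1$ or $\cW_1^{\perp}$, and $\cW_1$ is a splitting $d$-torsion class.

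The main obstacle is the choice of $Y_1$. We need $Y_1$ to receive no nonzero morphisms from the rest of $\cW_1$, including from $\cV$. Lemma \ref{lem:deleting_an_indecomposable_from_a_set} alone only handles the morphisms from $\cY \setminus Y_1$. The morphisms from $\cV$ are controlled separately, through the splitting property of $\cV$ via Lemma \ref{lem:criterion_for_splitting}(iii). A minor technical point is that $\cV$, being a $d$-torsion class, is closed under isomorphisms and direct summands; this is used when arguing that $Y_1 \notin \cV$ and $M \cong Y_1$.
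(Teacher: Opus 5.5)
Your proof is correct and is essentially the paper's argument. The paper builds the chain by explicit iteration rather than formal induction on $y$. At each step it likewise picks the next $Y_{t+1}$ via Lemma \ref{lem:deleting_an_indecomposable_from_a_set} applied to the not-yet-removed elements of $\cY$, uses the splitting of $\cV$ to get $Y_{t+1} \in \cV^{\perp}$, and checks Lemma \ref{lem:criterion_for_splitting}(iii) with the same case split ($M \cong Y_{t+1}$, or $M \notin \cW_t$ so that $M \in \cW_t^{\perp} \subseteq \cW_{t+1}^{\perp}$).
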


\begin{proof}
Suppose that $\cW_t \subset \cdots \subset \cW_1 \subset \cW_0 = \cW$ and $Y_t, \ldots, Y_1$ have been constructed for some $t \geqslant 0$.  Since each $\cW_i$ is obtained by dropping the indecomposable module $Y_i$, we have
\begin{equation}
\label{equ:lem:deleting_an_indecomposable_from_a_d-torsion_class:20}
  \cW_t = \add( \cY \setminus \{\, Y_t, \ldots Y_1 \,\},\cV ).
\end{equation}
Note that if $t = 0$, then $\{\, Y_t, \ldots Y_1 \,\}$ is empty.

We claim that it is enough to assume $\cV \subset \cW_t$ and construct $\cW_{ t+1 }$ and $Y_{ t+1 }$.  Indeed, we can then iterate the construction, which at some point will give $\cV = \cW_t$ since $\cY$ is finite.  At this point, we will have constructed the chain \eqref{equ:lem:deleting_an_indecomposable_from_a_d-torsion_class:10}, and Equation \eqref{equ:lem:deleting_an_indecomposable_from_a_d-torsion_class:20} will imply $\cY \setminus \{\, Y_t, \ldots Y_1 \,\} = \emptyset$ whence $t = y$, so \eqref{equ:lem:deleting_an_indecomposable_from_a_d-torsion_class:10} has length $y$ as claimed.

So we assume $\cV \subset \cW_t$ and construct $\cW_{ t+1 }$ and $Y_{ t+1 }$.  Observe that $\cV \subset \cW_t$ combined with Equation \eqref{equ:lem:deleting_an_indecomposable_from_a_d-torsion_class:20} implies that $Y_t, \ldots, Y_1$ are not yet all the indecomposable modules in $\cY\!$.  Using Lemma \ref{lem:deleting_an_indecomposable_from_a_set}, pick $Y_{ t+1 } \in \cY \setminus \{\, Y_t, \ldots, Y_1 \,\}$ such that $Y_{ t+1 } \in ( \cY \setminus \{\, Y_{ t+1 }, \ldots, Y_1 \,\} )^{ \perp }$.  Define
\begin{equation}
\label{equ:lem:deleting_an_indecomposable_from_a_d-torsion_class:30}
  \cW_{ t+1 } = \add( \cY \setminus \{\, Y_{ t+1 }, \ldots, Y_1 \,\},\cV ).
\end{equation}

Equations \eqref{equ:lem:deleting_an_indecomposable_from_a_d-torsion_class:20} and \eqref{equ:lem:deleting_an_indecomposable_from_a_d-torsion_class:30} imply $\cW_{ t+1 } \subset \cW_t$ and $\add( Y_{ t+1 },\cW_{ t+1 } ) = \cW_t$, so to complete the proof, it is enough to show that $\cW_{ t+1 }$ is a splitting $d$-torsion class.  By Lemma \ref{lem:criterion_for_splitting}(iii), it suffices to let an indecomposable module $M \in \cM$ be given and prove $M \in \cW_{ t+1 }$ or $M \in \cW_{ t+1 }^{ \perp }$.  Suppose $M \not\in \cW_{ t+1 }$.  Then by Equations \eqref{equ:lem:deleting_an_indecomposable_from_a_d-torsion_class:20} and \eqref{equ:lem:deleting_an_indecomposable_from_a_d-torsion_class:30} there are two possibilities: $M \cong Y_{ t+1 }$ or $M \not\in \cW_t$.  

If $M \cong Y_{ t+1 }$ then we know $M \in ( \cY \setminus \{\, Y_{ t+1 }, \ldots, Y_1 \,\} )^{ \perp }$.  We also know $M \not\in \cV$ whence $M \in \cV^{ \perp }$ by Lemma \ref{lem:criterion_for_splitting}(iii) since $\cV$ is a splitting $d$-torsion class.  It now follows from Equation \eqref{equ:lem:deleting_an_indecomposable_from_a_d-torsion_class:30} that we have $M \in \cW_{ t+1 }^{ \perp }$.  If $M \not\in \cW_t$ then we have $M \in \cW_t^{ \perp }$ by Lemma \ref{lem:criterion_for_splitting}(iii) since $\cW_t$ is a splitting $d$-torsion class.  It now follows from $\cW_{ t+1 } \subset \cW_t$ that we have $M \in \cW_{ t+1 }^{ \perp }$.
\end{proof}

Recall that the spine of a lattice was originally defined for trim lattices by Thomas in \cite[p.\ 257]{Thomas_An-analogue-of-distributivity-for-ungraded-lattices}.  It consists of the elements which belong to a chain of maximal length.

\begin{Theorem}
\label{thm:spine}
Assume that $\cM$ has no cycles and only finitely many isomorphism classes of indecomposable modules.  

Then the elements of the spine of the lattice of $d$-torsion classes of $\cM$ are precisely the splitting $d$-torsion classes.
\end{Theorem}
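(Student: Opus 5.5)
Let $n$ be the number of isomorphism classes of indecomposable modules in $\cM$. The plan is to show that every chain in $\dtors\,\cM$ has length at most $n$. Next, we show that the splitting classes are exactly the classes lying on chains of length $n$. The bottom $0$ and the top $\cM$ are trivially splitting, by Lemma \ref{lem:criterion_for_splitting}(iii).

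\emph{Upper bound.} A $d$-torsion class is an additive subcategory. In the Krull--Schmidt category $\cM$, it is therefore determined by the set of indecomposables it contains. A strict inclusion $\cU \subset \cU'$ of $d$-torsion classes thus adds at least one indecomposable. Hence any chain $\cU_0 \subset \cdots \subset \cU_\ell$ has $\ell \leqslant n$. Equality forces each step to add exactly one indecomposable, starting at $0$ and ending at $\cM$.

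\emph{Splitting $\Rightarrow$ spine.} Let $\cU$ be splitting. Apply Lemma \ref{lem:deleting_an_indecomposable_from_a_d-torsion_class} to the pair $\cU \subseteq \cM$ and to the pair $0 \subseteq \cU$. Both pairs consist of splitting classes differing by finitely many indecomposables. This yields chains $\cU \subset \cdots \subset \cM$ and $0 \subset \cdots \subset \cU$ in which each step drops one indecomposable. Concatenating them gives a chain through $\cU$ of length exactly $n$. By the upper bound this length is maximal, so $\cU$ lies in the spine.

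\emph{Spine $\Rightarrow$ splitting.} This is the main obstacle. Take a maximal-length chain $0 = \cU_0 \subset \cU_1 \subset \cdots \subset \cU_n = \cM$, where $\cU_{i+1} = \add(X_{i+1},\cU_i)$ for an indecomposable $X_{i+1}$. I would prove by induction on $i$ that each $\cU_i$ is splitting; $\cU_0 = 0$ is splitting.

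For the inductive step, fix an indecomposable $M \in \cM$ not in $\cU_{i+1}$. By Lemma \ref{lem:criterion_for_splitting}(iii) it suffices to show $\Hom_A(\cU_{i+1},M)=0$. Since $\cU_i$ is splitting, we already have $M \in \cU_i^\perp$, so only $\Hom_A(X_{i+1},M)=0$ remains.

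For this I would use the canonical $d$-exact sequence of $M$ with respect to $\cU_{i+1}$. Its $\cU_{i+1}$-cover $U \to M$ is injective, and $U$ is a direct sum of copies of $X_{i+1}$ and objects of $\cU_i$. Composing the summands from $\cU_i$ into $M$ gives zero, because $M \in \cU_i^\perp$. Since the map is injective, those summands must vanish, so $U \cong X_{i+1}^m$ embeds in $M$.

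Now suppose $m>0$. The idea is to build a $d$-torsion class strictly between $\cU_i$ and $\cU_{i+1}$ in size, or otherwise a longer chain, contradicting maximality. Concretely, I would first try to show that the smallest $d$-torsion class containing $\cU_i$ and $M$ (their join) has a predictable set of indecomposables. Then I would compare it with the chain, using the no-cycles hypothesis through Lemma \ref{lem:deleting_an_indecomposable_from_a_set}, and derive that the chain could be refined.

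If that fails, the alternative is a counting argument. Show that in any maximal chain the added indecomposables $X_1,\dots,X_n$ must satisfy $\Hom_A(X_j,X_i)=0$ for $j>i$ (an ordering compatible with the absence of cycles). This would give $X_{i+1} \in \cU_i^{\perp}$, with $\Hom$ vanishing in the right direction for every later $X_j$. Since $M$ is isomorphic to some later $X_j$, this would yield $\Hom_A(X_{i+1},M)=0$, completing the induction. Establishing this ordering from the $d$-torsion axioms (each $\cU_{i+1}$ must admit canonical sequences) is the step I expect to require the real work.
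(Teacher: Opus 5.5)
\textbf{The gap is in ``spine $\Rightarrow$ splitting'', which the proposal does not actually prove.} Your upper bound and the direction ``splitting $\Rightarrow$ spine'' are correct and match the paper.

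Your first strategy cannot work as stated. No class lies strictly between $\cU_i$ and $\cU_{i+1}$, since they differ by one indecomposable, and ``otherwise a longer chain'' is never made concrete.

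Your second strategy is circular. The vanishing you need is $\Hom_A(X_a,X_b)=0$ for $a<b$; you wrote the reversed direction. By Lemma~\ref{lem:criterion_for_splitting}(iii), this vanishing is literally equivalent to every $\cU_i$ being splitting.

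More seriously, the upward inductive step cannot be carried out from the local data you use, namely that $\cU_i$ is splitting and $\cU_{i+1}=\add(X_{i+1},\cU_i)$ is a $d$-torsion class. Take $d=1$ and $A$ the path algebra of $A_2$. Then $\cM=\mod A$ has no cycles. Its indecomposables are a simple projective $S$, the projective-injective $P$ and a simple injective $T$, with almost split sequence $0\to S\to P\to T\to 0$. Here $0$ is splitting and $\add S$ is a torsion class one indecomposable larger. Yet $\add S$ is not splitting, because $\Hom_A(S,P)\neq 0$; in your notation $m=1$. So an upward argument would have to exploit the part of the chain above $\cU_{i+1}$, and you give no mechanism for this.

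The missing idea is to induct downward from $\cM$, where the step is local and true. This uses Theorem~\ref{thm:generalisation_of_ASS_VI1.7}, which your proposal never invokes.
\begin{itemize}
\item Suppose some class in the maximal chain is not splitting. Choose consecutive classes $\cW_{t+1}\subset\cW_t=\add(Y,\cW_{t+1})$ with $\cW_t$ splitting and $\cW_{t+1}$ not.
\item Applying Theorem~\ref{thm:generalisation_of_ASS_VI1.7}(ii) to both classes gives an indecomposable $W\in\cW_{t+1}$ with $\tau_d^-W\in\cW_t\setminus\cW_{t+1}$. This forces $Y\cong\tau_d^-W$.
\item Take the $d$-Auslander--Reiten sequence $0\to W\to M^1\to\cdots\to M^d\to Y\to 0$. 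Closure of $\cW_t$ under $d$-extensions, together with minimality of the sequence, gives $M^1\in\cW_t$.
\item The no-cycles hypothesis shows $Y$ is not a summand of $M^1$, since each differential is non-zero on every indecomposable summand. Hence $M^1\in\cW_{t+1}$.
\item Closure of $\cW_{t+1}$ under $d$-quotients then yields $Y\in\cW_{t+1}$, a contradiction.
\end{itemize}
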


\begin{proof}
We let $m$ denote the number of isomorphism classes of indecomposable modules in $\cM$.

On the one hand, let $\cU \subseteq \cM$ be a splitting $d$-torsion class.  Applying Lemma \ref{lem:deleting_an_indecomposable_from_a_d-torsion_class} to the pairs $0 \subseteq \cU$ and $\cU \subseteq \cM$ produces two chains of splitting $d$-torsion classes in $\cM\!$.  Merging them gives a chain of splitting $d$-torsion classes in $\cM\!$,
\[
  0 \subset \cdots \subset \cU \subset \cdots \subset \cM\!,
\]
such that each is obtained by dropping one indecomposable module from the class immediately above it.  This property forces the chain to be of length $m$, and to be of maximal length in $\dtors\,\cM\!$, the lattice of $d$-torsion classes in $\cM$.  Since $\cU$ is in the chain, it is an element of the spine of $\dtors\,\cM$.  

Observe that as a byproduct, we learned that a chain of maximal length in $\dtors\,\cM$ has length $m$.

On the other hand, let $\cU \subseteq \cM$ be a $d$-torsion class which is an element of the spine of $\dtors\,\cM\!$.  Then there exists a chain of $d$-torsion classes in $\cM\!$,
\[
  0 = \cW_m \subset \cdots \subset \cW_1 \subset \cW_0 = \cM\!,
\]
which has maximal length, such that $\cU$ is one of the $\cW_i$.  By the previous part of the proof, the length of the chain is indeed $m$, and this implies that each class in the chain is obtained by dropping one indecomposable module from the class immediately above it.  To prove that $\cU$ is a splitting $d$-torsion class, it is enough to prove that so is each $\cW_i$.  Assume not.  Then it is easy to see that there exists $t$ such that $\cW_t$ is splitting but $\cW_{ t+1 }$ is not.  By Theorem \ref{thm:generalisation_of_ASS_VI1.7}(ii) there is an indecomposable module $W \in \cW_{ t+1 }$ such that
\begin{equation}
\label{equ:thm:spine:10}
  \tau_d^-W \not\in \cW_{ t+1 }.
\end{equation}
However, we know $\cW_{ t+1 } \subset \cW_t$, so $W$ is in the splitting $d$-torsion class $\cW_t$ whence Theorem \ref{thm:generalisation_of_ASS_VI1.7}(ii) implies 
\begin{equation}
\label{equ:thm:spine:20}
  \tau_d^-W \in \cW_t.
\end{equation}
There is an indecomposable module $Y \in \cM$ such that 
\begin{equation}
\label{equ:thm:spine:25}
  \add( Y,\cW_{ t+1 } ) = \cW_t,
\end{equation}
and Equations \eqref{equ:thm:spine:10} and \eqref{equ:thm:spine:20} imply $Y \cong \tau_d^-W$.  Hence the $d$-Auslander--Reiten sequence starting at $W$ is
\begin{equation}
\label{equ:thm:spine:30}
  0
  \xrightarrow{} W
  \xrightarrow{} M^1
  \xrightarrow{} \cdots
  \xrightarrow{} M^d
  \xrightarrow{} Y
  \xrightarrow{} 0.
\end{equation}
The sequence is minimal, see \cite[def.\ 3.1]{Iyama_Higher-dimensional-AR-theory} and \cite[rmk.\ 4.5]{Fedele_d-Auslander-Reiten-sequences_in_subcategories}, so since we have $W \in \cW_{ t+1 } \subset \cW_t$ and $Y \cong \tau_d^-W \in \cW_t$, we get $M^1 \in \cW_t$ since $\cW_t$ is closed under $d$-extensions; see \cite[lem.\ 3.8(1) and prop.\ 3.11]{August-et-al_A-characterisation-of-higher-torsion-classes}.  Now note that $Y$ cannot be a direct summand of $M^1$.  Indeed, \cite[lem.\ 3.8]{Pasquali_Tensor-products-of-n-complete-algebras} implies that each differential of \eqref{equ:thm:spine:30} is non-vanishing on each direct summand of an object, so if $Y$ were a direct summand of $M^1$, then \eqref{equ:thm:spine:30} would provide a cycle from $Y$ to $Y$ in $\cM$.  We have learned that $M^1 \in \cW_t$ and that $Y$ is not a direct summand of $M^1$.  By Equation \eqref{equ:thm:spine:25} this implies $M^1 \in \cW_{ t+1 }$.  But $\cW_{ t+1 }$ is closed under $d$-quotients by \cite[lem.\ 3.8(2) and prop.\ 3.11]{August-et-al_A-characterisation-of-higher-torsion-classes}, so we get $M^2, \ldots, M^d, Y \in \cW_{ t+1 }$.  By $Y \cong \tau_d^-W$ this contradicts Equation \eqref{equ:thm:spine:10}.
\end{proof}

\begin{Example}
[Dynkin type $A$]
\label{exa:Type_A}
Let $n \geqslant 1$ be an integer.  The $d$-Auslander algebra $A_n^d$ of Dynkin type $A_n$ was introduced in \cite[sec.\ 6.2]{Iyama_Cluster-tilting-for-higher-Auslander-algebras} based on the general theory developed in \cite{Iyama_Auslander-correspondence}.  There is a unique $d$-cluster tilting subcategory $\cM_n^d \subseteq \mod\,A_n^d$, whose lattice of $d$-torsion classes was investigated in \cite[sec.\ 5.5]{August-et-al_A-characterisation-of-higher-torsion-classes}.

The conditions of Theorem \ref{thm:spine} are satisfied by the concrete description of $\cM_n^d$ provided in \cite[thm./constr.\ 3.4 and thm.\ 3.6(3)]{Oppermann-Thomas_Higher-dimensional-cluster-combinatorics-and-representation-theory}.

Hence Theorem \ref{thm:spine} implies that the elements of the spine of the lattice of $d$-torsion classes of $\cM_n^d$ are precisely the splitting $d$-torsion classes.
\end{Example}

Note that the same method can also be applied to the higher Nakayama algebras of type $A$ introduced in \cite[def.\ 2.17]{Jasso-Kuelshammer-Psaroudakis-Kvamme_Higher_Nakayama_algebras_I} by Jasso, K\"{u}lshammer, Kvamme, and Psaroudakis.

\medskip
\noindent
{\bf Statement on AI.}
All results in the paper were originally proved by the authors, and all parts of the paper were written by them.  The proof of Theorem \ref{thm:generalisation_of_ASS_VI1.7} was improved by ideas supplied by Microsoft Copilot.  Substantial help with the references was provided by ChatGPT Pro 5.6.

\medskip
\noindent
{\bf Acknowledgement.}
This work was supported by an International Mobility Grant from the Research Council of Norway (project number 357034) and a NOVA Grant from Aarhus University Research Foundation (grant AUFF-E-2024-9-43).

\end{document}